\documentclass[11pt,a4paper,thmsb,ukenglish,fleqn]{article}
\pagestyle{headings}
\usepackage{epsf,  amsmath, amssymb, graphicx}
\usepackage{amsthm}
\usepackage[sort]{natbib}
\usepackage{a4wide}
\usepackage{subfigure}
\usepackage{xcolor}
\setlength{\marginparwidth}{2cm}

\usepackage{fancyhdr}
\pagestyle{fancy}
\chead{} 
\rhead{}
\lhead{}

\newcommand{\mycomment}[1]{}

\newcommand{\R}{\ensuremath{\Bbb{R}}}
\newcommand{\N}{\ensuremath{\Bbb{N}}}

\newcommand{\Var}{\ensuremath{\text{Var}}}
 \newcommand{\Cor}{\ensuremath{\text{Cor}}}
 \newcommand{\Cov}{\ensuremath{\text{Cov}}}

\newcommand{\leb}{\ensuremath{Leb}}
\def\Levy{L\'{e}vy }
\def\levy{L\'{e}vy}

\newtheorem{theorem}{Theorem}

\newtheorem{condition}[theorem]{Condition}

\newtheorem{corollary}{Corollary}

\newtheorem{definition}[theorem]{Definition}
\newtheorem{example}{Example}

\newtheorem{proposition}{Proposition}
\newtheorem{remark}{Remark}

\allowdisplaybreaks
\begin{document}
\title{Stationary and multi-self-similar random fields with stochastic volatility}
\author{\textsc{Almut E.~D.~Veraart} \\
\textit{Department of Mathematics, Imperial College London}\\
\textit{ 180 Queen's Gate, 
 London, SW7 2AZ, 
UK}  \\
\texttt{a.veraart@imperial.ac.uk}
}
\date{Version: October 30, 2013}
\maketitle

\begin{abstract}
This paper introduces   stationary and multi-self-similar random fields which account for  stochastic volatility  and have type G marginal law.
The stationary random fields are constructed using volatility modulated mixed moving average fields and their probabilistic properties are discussed. Also,  two methods for parameterising the weight functions in the moving average representation are presented: One method is based on Fourier techniques and  aims at reproducing a given correlation structure, the other method is based on ideas from  stochastic partial differential equations.
 Moreover, using a generalised Lamperti transform we construct  volatility modulated multi-self-similar random fields which  have  type G distribution.
\end{abstract}

 \noindent{\bf Keywords:} 
 Mixed moving average fields, stochastic volatility, L\'{e}vy basis, stationarity, infinite divisibility, multi-self-similarity, generalised Lamperti transform, type G distribution.\\ 
\noindent {\bf Mathematics Subject Classification:} 60G10, 60G18, 60G60
\maketitle{}
\section{Introduction}
Stationary infinitely divisible stochastic processes and random fields have been widely studied in the probability literature and have been found to constitute important building blocks for the stochastic modelling 
 of a wide range of  empirical phenomena. While influential theoretical work on infinitely divisible distributions and processes has to a great extent been established  in the  1970s and 1980s,  see e.g.~\cite{Sato1999} and \cite{SteutelvanHarn2004} for recent textbook treatments, 
the recent probability literature has taken up this topic again - not least due to research questions arising in the context of financial applications or in modelling of  turbulence in physics.
A recent review on this topic and related results can be found in 
\cite{BN2011}.

This paper focuses on 
(strictly) 
 stationary   random fields, which are parameterised as 
  so-called \emph{mixed moving average} (MMA) fields.
More precisely, consider a real-valued random field $X=(X(t))_{t\in \mathbb{R}^d}$ for $d\in \mathbb{N}$.
 An MMA field is given by 
\begin{align*}
X(t) &= \int_{\mathcal{X} \times\mathbb{R}^d}g(x,t-s)M(dx,ds), \quad t \in \mathbb{R}^d,
\end{align*}
where $\mathcal{X}$ is a subset of the Euclidean space $\mathbb{R}^k$ for $k\in \mathbb{N}$ and  $g: \mathcal{X} \times \mathbb{R}^d \to \mathbb{R}$ is a measurable deterministic function and 
$M$ is a \Levy basis, i.e.~an independently scattered, infinitely divisible  random measure. Random fields of such a type have for instance been studied in the context of  stable MMA fields
 by \cite{Surgailisetal1993}, also \cite{BNStelzer2010b} consider  \emph{supOU} processes, and  \cite{BN2011, BNLSV2012} study \emph{trawl} processes which fall into the class of MMA fields.

Motivated by the aforementioned  literature, but recognising the fact that such basic MMA models lack an important component which is relevant in many empirical studies, this paper concentrates on  the class of  mixed moving average fields which allow for \emph{stochastic volatility}. 
In particular, we will propose to replace the \Levy basis $M$ above by a volatility modulated Gaussian \Levy basis of the form 
\begin{align}\label{withSV}
M(dx, ds) = \sigma(s) W(dx, ds),
\end{align}
where $\sigma$ denotes a stochastic volatility field and $W$ a Gaussian \Levy basis.
Such random fields embed certain types of ambit fields, see  \cite{BNBV2012RecAdv}, which have recently been introduced in the literature.

While working with a Gaussian L\'{e}vy basis is very appealing from a mathematical point of view, many real world phenomena are not Gaussian and   we often need to account for distributions with   (semi)-heavy tails. A natural starting point for allowing for stochastic volatility and non-Gaussian distributions, is  to study distributions of \emph{type G}, see e.g.~\cite{Marcus1987,Rosinski1991}. A  distribution is of  type G if it is a variance mixture of a normal random variable with an independent  infinitely divisible mixing variable. 
This is a very wide class of distributions, which e.g.~includes the symmetric stable and the symmetric generalised hyperbolic distributions,   and is a natural starting point for studying non-Gaussian, volatility modulated  processes. 
While 
L\'{e}vy processes with type G distribution have been studied in detail in e.g.~\cite{Rosinski1991} and \cite{BNPerezAbreu1999}, this paper focuses on processes and random fields which are obtained through a variance mixture of a Gaussian L\'{e}vy basis as in \eqref{withSV} --  which will be made precise in the following section.

In particular,  this paper will introduce stationary volatility modulated MMA fields whose marginal distribution is of type G. 
We will study the probabilistic properties of such processes in detail.

While an MMA model appears to be rather general, in practical applications we often wish to parameterise the weight function $g$. Hence  we  introduce two methods for finding suitable parametric models for the  weight function $g$. First we start off  from the perspective of a given covariance function and  study which weight function can induce a given covariance function. Next, we will discuss how the weight function can be linked to a Green's function in certain types of stochastic partial differential equations, which could be taken as an alternative route for model building.

Stationary stochastic processes are of great importance in their own right, but in this paper we also use them as a tool for constructing \emph{(multi-) self-similar} random fields which exhibit stochastic volatility.  
Self-similar stochastic processes have been studied in great detail in the last five decades since the law of many empirical phenomena appears to be invariant under suitable temporal or spatial scaling. Relevant examples can for instance be found in climatology, hydrology,  turbulence,  network traffic, and  in economics.
Having in mind that stationary processes can be linked to self-similar processes via the Lamperti transform, see \cite{Lamperti1962}, we use the so-called \emph{generalised Lamperti transform}, see \cite{GPT2007},  to construct random fields which are \emph{multi-self-similar}, allow for stochastic volatility and whose distribution is also of type G. 
To the best of our knowledge, this is  first paper which studies 
 stochastic volatility modulation  and multi-self-similarity simultaneously.

The remaining part of the article is structured as follows. Section 
\ref{SectPre} reviews the relevant background material on \Levy bases.
The class of volatility modulated mixed moving average fields is defined in Section \ref{SectVMMMA} and its probabilistic properties are studied in detail. Next, Section \ref{SectCor} presents two methods for finding  relevant parametrisations of the weight function. Moreover,  we construct multi-self-similar random fields with stochastic volatility in Section \ref{SectMSS} and, finally, Section \ref{SectCon} concludes. 

 \section{Preliminaries}\label{SectPre}
 In the following we briefly review basic definitions and well-known facts on  \Levy bases, cf.~\cite{RajRos89}, \cite{P} and \cite{BNBV2012RecAdv} for details.

 Let $(\Omega, \mathcal{F}, P)$ denote  a  probability space and 
$(S,\mathcal{S}, \leb)$  a Lebesgue-Borel space; here  
 $S$ denotes a 
 Borel set in 
$\mathbb{R}^m$ for a $m \in \mathbb{N}$; a typical choice would be  $S= \mathbb{R}^m$. Also, we denote by   $\mathcal{S}=\mathcal{B}(S)$  the Borel $\sigma$-algebra on $S$ and by $\leb$  the Lebesgue measure.
Moreover, we define the $\delta$-ring
$$\mathcal{B}_b(S)= \{A \in \mathcal{S}: \leb(A) < \infty \},$$
which is the subset of $\mathcal{S}$ that contains sets which have bounded Lebesgue measure. 

Recall that a \Levy basis is defined as an independently scattered random measure $L = \{L(A): A \in \mathcal{B}_b(S)\}$ on $\mathcal{B}_b(S)$, such that for every  $A \in \mathcal{B}_b(S)$, $L(A)$ is infinitely divisible (ID) with characteristic function
\begin{multline}\label{LevKhi}
\mathbb{E}(\exp(i \theta L(A))
\\ = \exp\left(i \theta a^*(A) - \frac{1}{2}\theta^2 b^*(A) + \int_{\mathbb{R}}\left(e^{i\theta x}-1-i\theta  x \mathbb{I}_{[-1,1]}(x)\right) n(dx, A)\right), 
\end{multline}
for $\theta \in \R$. Here 
$a^*$ is a signed measure on $\mathcal{B}_b(S)$, $b^*$ is a measure on $\mathcal{B}_b(S)$, and 
$n(\cdot,\cdot)$ is the generalised L\'{e}vy measure, meaning that $n(dx, A)$ and a measure on $\mathcal{B}_b(S)$ for fixed $dx$ and  a L\'{e}vy measure on $\mathbb{R}$ for fixed $A \in \mathcal{B}_b(S)$.
Define the measure $c$ by
\begin{align}\label{ControlM}
c(A) = |a^*|(A) + b^*(A) + \int_{\R}\min(1,x^2)n(dx,A),\qquad A \in \mathcal{B}_b(S).
\end{align}
Following \citet[Proposition 2.1 (c), Definition 2.2]{RajRos89}, we define the
 \emph{control measure} as   the extension of the  measure 
$c$ to 
  a $\sigma$-finite measure on 
$(S, \mathcal{S})$, which we will also denote by $c$. It is often useful to employ an infinitesimal notation, as given below. 
We define  
  the Radon-Nikodym derivatives of the three components of $c$ by
 \begin{align}\label{CQ}
 a(z) = \frac{da^*}{dc}(z), &&
 b(z) = \frac{db^*}{dc}(z), &&
 \nu(dx, z) = \frac{n(dx,\cdot)}{dc}(z),
 \end{align}
 where we will assume w.l.o.g.~that $\nu(dx, {\bf z})$ is a \Levy measure for each fixed ${\bf z}$.
 We call  $(a, b, \nu(dx, \cdot), c)=(a(z), b(z), \nu(dx, z), c(dz))_{z\in S}$ the  \emph{characteristic quadruplet} (CQ) associated with the \Levy basis $L$. 
Typically we work with  \emph{dispersive} L\'{e}vy bases, which satisfy   $c(\{z\})=0$ for all $z\in S$.

Note also that we have
\begin{align}\begin{split}\label{LevKhii}
&\mathbb{E}(\exp(i \theta L(dz))
\\
& = \exp\left( \left(i \theta a(z) - \frac{1}{2}\theta^2 b(z) + \int_{\mathbb{R}}\left(e^{i\theta x}-1-i\theta x \mathbb{I}_{[-1,1]}(x)\right) \nu(dx,z)\right)c(dz) \right)\\
&= \exp\left( K(\theta, L'(z)) c(dz)\right), \qquad \theta \in \R,
\end{split}\end{align}
where we call $L'(z)$  the \emph{Levy seed} of $L$ at $z$, which is defined as the infinitely divisible random variable having \levy-Khintchine representation 
\begin{align}\label{CharL'}
 \mathbb{E}(i \theta L'(z )) &= \exp(K(\theta, L'(z))),\\
K(\theta, L'(z)) & =  i \theta a(z) - \frac{1}{2}\theta^2 b(z) + \int_{\mathbb{R}}\left(e^{i\theta x}-1-i\theta x \mathbb{I}_{[-1,1]}(x)\right) \nu(dx, z).
\end{align}
Finally, recall that 
if $\nu(dr, z)$ does not depend on $z$, we call  $L$ \emph{factorisable}. 
If, in addition, 
 $c$ is also proportional to the Lebesgue measure and $a(z)$ and $b(z)$ do not depend on $z$, then $L$ is called \emph{homogeneous}. 

 In the following, we will define integrals with respect to L\'{e}vy bases, where we use the integration concept developed in  \cite{RajRos89} when we are dealing with deterministic integrands. In particular, let $f: (S, \mathcal{S}) \to (\mathbb{R}, \mathcal{B}(\mathbb{R}))$ denote a measurable function. According to \citet[Theorem 2.7]{RajRos89}, $f$ is integrable with respect to $L$ if and only if
 \begin{align}\begin{split}
\label{GenIntCond}
 \int_{S} \left| f(s) a(s) + \int_{-\infty}^{\infty} \left(\mathbb{I}_{[-1,1]}(w f(s))- f(s)\mathbb{I}_{[-1,1]}(w) \right)\nu(dw,s)\right| c(ds) < \infty,\\
 \int_S|f(s)|^2b(s) c(ds) < \infty,
 \\
 \int_{S}  \int_0^{\infty}\min(1, |f(s)w|^2)\nu(dw,s) c(ds) < \infty.
  \end{split}
\end{align}
\section{Volatility modulated mixed moving average fields}\label{SectVMMMA}

Let $S=\mathcal{X} \times \mathbb{R}^d$ for $d\in \mathbb{N}$ and let $W$ denote a standard  Gaussian independently scattered random measure with characteristic quadruplet $(0,1, 0, c)$.
In particular, the characteristic function of $W$ is given by
$\mathbb{E}(\exp(i \theta W(A))) = \exp\left(-\frac{1}{2}\theta^2c(A)\right)$, $\forall A\in \mathcal{B}_b(S)$.
In the following, we will assume that $c(dz) = p(dx)ds$ for a probability measure $p$.

A \emph{volatility modulated mixed moving average} (VMMMA) field is defined by 
\begin{align}\label{VMMMA}
X(t) &= \int_{\mathcal{X} \times\mathbb{R}^d}g(x,t-s)\sigma(s)W(dx,ds), \quad t \in \mathbb{R}^d,
\end{align}
where  $g: \mathcal{X} \times \mathbb{R}^d \to \mathbb{R}$ is a measurable deterministic function and 
 $\sigma: \mathbb{R}^d \times \Omega \to \mathbb{R}$ is a strictly stationary random field independent of $W$. 
Note that throughout the paper, we write $t=(t_1, \dots, t_d)^{\top}$, where we typically 
  interpret the parameter $t_1$ as the \emph{time} parameter, and the remaining parameters $(t_2, \dots, t_d)$ as \emph{space} parameters.
  
  Since the integrand in \eqref{VMMMA} is stochastics, we cannot work with the \cite{RajRos89} integration concept to define the integral in  \eqref{VMMMA}. Instead we use the theory of \cite{W}, who developed and integration theory with respect to orthogonal martingale measures. A detailed description and review of this integration theory can be found in \cite{BNBV2011, BNBV2012RecAdv}.

To this end, we define the filtration
    as follows. First of all, we separate out the time parameter in the \Levy basis, which we denote by $t_1$. Then, we define 
$W_{t_1}(A)= W([0,t_1]\times A)$ for a measurable set $A \subset S:=\mathcal{X}\times \R^{d-1}$. Let  
\begin{align}\label{filt}
\mathcal{F}_{t_1} = \cap_{n=1}^{\infty} \mathcal{F}_{t_1+1/n}^0,&& \text{where}\qquad  \mathcal{F}_{t_1}^0 =\sigma \{W_s(A): A \in \mathcal{B}_b(S), 0 < s \leq t_1 \}\vee \mathcal{N},
\end{align}
and 
where $\mathcal{N}$ denotes the $P$-null sets of $\mathcal{F}$. 
We see  that  $\mathcal{F}_{t_1}$ is right-continuous by construction. 
  Following \cite{W}, one can then define a stochastic integral with respect to the Gaussian \Levy basis, where we require that the integrand is square-integrable and predictable in the time-component. More precisely, we need the following condition.

\begin{condition}\label{I}
Suppose that $\sigma=(\sigma(s))_{s\in \R^d}$ is predictable in the first component $s_1$ and that
\begin{align}\label{I2Cond}
\int_{\mathcal{X}\times \mathbb{R}^d}  g^2(x,t-s)\mathbb{E}\left(\sigma^2(s)\right)p(dx)ds < \infty.
\end{align}
\end{condition}

\begin{proposition}[Existence]
The random field $X$ is well-defined provided equation  \eqref{I2Cond} holds. Also, $X$ is 
 strictly stationary, and, since it is  square integrable, $X$ is also second-order stationary.
\end{proposition}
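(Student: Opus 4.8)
The plan is to establish the three claims in sequence: existence via Walsh's $L^2$-theory, strict stationarity via a conditional characteristic-function computation, and second-order stationarity as an immediate corollary.

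For existence, I would observe that the integrand $h_t(x,s)=g(x,t-s)\sigma(s)$ is predictable in the time coordinate $s_1$ (since $g$ is deterministic and $\sigma$ is predictable in $s_1$ by Condition \ref{I}), and that, because $W$ has characteristic quadruplet $(0,1,0,c)$ with $c(dz)=p(dx)\,ds$, the relevant $L^2$-norm in Walsh's theory is
\begin{align*}
\E\int_{\mathcal{X}\times\R^d} h_t^2(x,s)\,p(dx)\,ds = \int_{\mathcal{X}\times\R^d} g^2(x,t-s)\,\E(\sigma^2(s))\,p(dx)\,ds,
\end{align*}
where independence of $\sigma$ and $W$ together with Tonelli has been used. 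This is finite precisely by \eqref{I2Cond}, so $X(t)$ is well-defined as a Walsh integral and is square-integrable.

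For strict stationarity, I would compute the joint characteristic function of $(X(t^{(1)}),\dots,X(t^{(n)}))$ by first conditioning on the whole path of $\sigma$. Given $\sigma$, the integrand is deterministic and $W$ is Gaussian, so the conditional law is centred Gaussian and
\begin{align*}
\E\Big[\exp\Big(i\sum_{j}\theta_j X(t^{(j)})\Big)\,\Big|\,\sigma\Big] = \exp\Big(-\tfrac{1}{2}\int_{\mathcal{X}\times\R^d}\Big(\sum_j \theta_j g(x,t^{(j)}-s)\Big)^2 \sigma^2(s)\,p(dx)\,ds\Big).
\end{align*}
Taking expectations over $\sigma$ and then evaluating the shifted field at $t^{(j)}+h$, the substitution $s\mapsto s+h$ (using translation invariance of $ds$, and hence of $p(dx)\,ds$ in the $s$-variable) turns $\sigma^2(s)$ into $\sigma^2(s+h)$ inside the functional while leaving the $g$-terms equal to their unshifted counterparts $g(x,t^{(j)}-s)$. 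Strict stationarity of $\sigma$ gives $(\sigma(\cdot+h))\stackrel{d}{=}(\sigma(\cdot))$, and since the integral functional is a measurable function of the path, its law is unchanged; hence the characteristic functions of the shifted and unshifted vectors coincide, which is exactly strict stationarity. Second-order stationarity then follows at once: square-integrability yields finite second moments via \eqref{I2Cond}, the mean is zero because $W$ carries no drift and $\sigma\perp W$, and strict stationarity forces $\Cov(X(t),X(t'))$ to depend on $t-t'$ only.

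The main obstacle I anticipate is the rigorous justification of the conditional-Gaussianity step: one must argue that conditioning on $\sigma$ renders the Walsh integral an integral of a \emph{deterministic} kernel against the Gaussian martingale measure $W$, so that its conditional distribution is centred Gaussian with the stated variance. This rests on the independence $\sigma\perp W$ together with the Gaussian structure of $W$. The accompanying measurability claim — that the integral functional of $\sigma$ is determined by the finite-dimensional distributions, so that strict stationarity of $\sigma$ transfers to it — is routine via approximation by simple functions, but should be stated with care.
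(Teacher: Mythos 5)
Your proposal is correct and follows essentially the same route as the paper, whose own proof consists of a one-line citation to Walsh's $L^2$-theory of martingale-measure integration (and the survey of Barndorff-Nielsen, Benth and Veraart) for exactly the isometry and conditional-Gaussianity arguments you spell out. In particular, your conditional characteristic-function computation for strict stationarity is precisely the argument the paper itself records later in Proposition \ref{FinDist}, so you have merely filled in the details the paper delegates to its references.
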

\begin{proof}
This is an immediate consequence of \cite{W}, see also \cite{BNBV2012RecAdv} for details.
\end{proof}

\subsection{Examples}
Note that if the space $\mathcal{X}$ consists of only one point, or if  $p$ has only one atom, then we obtain the (volatility modulated) moving average field, cf.~\cite{Surgailisetal1993}, \citet[p.~591-592]{STaqqu1994},
\begin{align}\label{MA}
X(t) &= \int_{\mathbb{R}^d}g(t-s)\sigma(s)W(ds), \quad t \in \mathbb{R}^d.
\end{align}
When we study the \emph{mixed} case,  we think of 
$x$  as a parameter (vector) of the weight function $g$, which  can  be randomised through the L\'{e}vy basis.
I.e.~in terms of choices of the weight function, we could essentially choose the same ones as in the classical moving average case and then randomise some or all of the parameters in the weight function. 

\begin{example} 
As a first example, we consider supOU processes with stochastic volatility. They are obtained by choosing $d=1$, $\mathcal{X}=\mathbb{R}$ and  $g(x, t-s)=\exp(-x (t-s))\mathbb{I}_{[0,\infty)}(t-s)$. Clearly, an Ornstein-Uhlenbeck (OU) process is a special case of a supOU process. When we randomise the mean-reversion parameter in the OU process, here denoted by $x$, we can obtain long memory processes, cf.~\cite{BN2000}.
\end{example}

\begin{example}
When $g(x,t-s):=\mathbb{I}_{A(0)}(s-t)f(x,t-s)$ for a measurable set $A(0)\subset (-\infty,0] \times \mathbb{R}^{d-1}$ and $A(t) = A(0)+t$, we obtain a stationary mixed ambit field of the form
\begin{align*} 
 X(t) &= \int_{\mathcal{X} \times A(t)}f(x,t-s)\sigma(s)W(dx,ds).
\end{align*}
Ambit fields have been introduced to model tempo-spatial phenomena such as turbulence, cell growth and financial futures, see \cite{BNBV2012RecAdv} for a recent survey, and they constitute an analytically tractable alternative to modelling by stochastic (partial) differential equations.   
Note that in  applications one often needs an additional drift term which we ignore in this paper to simplify the exposition.
\end{example}

As soon as we remove the stochastic volatility component, we  are back to classes of stochastic processes which have been studied in the literature. Related random fields and stochastic processes in the absence of stochastic volatility include a L\'{e}vy-driven mixed moving average field of the form
\begin{align*}
X(t) &= \int_{\mathcal{X}\times \mathbb{R}^d} g(x, t-s)L(dx, ds),
\end{align*}
where $L$ is a L\'{e}vy basis. Clearly, $X$ is a mixed moving average field as defined (in the context of symmetric $\alpha$ stable random measures) by \cite{Surgailisetal1993}, see also \cite{Fasen2005, MoserStelzer2013} for the case when $t \in \mathbb{R}$. 
Also, let  $A(0)\subset (-\infty,0] \times \mathbb{R}^{d-1}$ denote  a measurable set (as before). Then  the stochastic process $X=(X(t))_{t\in\mathbb{R}}$ defined by 
\begin{align*}
X(t) &=
\int_{\mathcal{X}\times \mathbb{R}^d} \mathbb{I}_{A(0)}(x,s-t)L(dx,ds),
\end{align*}
is a so-called \emph{trawl process}, cf.~\cite{BNLSV2012,BNBV2012RecAdv}.

In Section \ref{ginplane}  we will discuss some possible choices of the weight function $g$, which go beyond the  choices of  OU, supOU or trawl   weight functions.

\subsection{Probabilistic properties of VMMMA fields}\label{SectProbVMMMA}
Let $\mathcal{F}^{\sigma}$ denote the $\sigma$-algebra generated by the random field $\sigma$. 
Then, since we assume that  $\sigma$ is independent of $W$, we have
\begin{align*}
X(t)|\mathcal{F}^{\sigma} \sim N\left(0, V(t)\right), 
\end{align*}
where 
\begin{align}\label{V}
 V(t)=\int_{\mathcal{X} \times \mathbb{R}^d}g^2(x,t-s)\sigma^2(s)p(dx)ds.
\end{align}
We observe that $V=(V(t))_{t \in \R^d}$ has the nature of an integrated weighted volatility field. 
Recall that in the financial econometrics and mathematical finance literature, integrated volatility  
of the form
\begin{align*}
\int_0^t\sigma(s) ds, \quad t\geq 0,
\end{align*}
is a key quantity of interest since it constitutes accumulated stochastic variance over a time period $[0, t]$, see e.g.~\cite{BNS}.
In the context of the random fields we study here, where $s\in \R^d$ is multivariate, we obtain a related quantity, where we do not just accumulate over the time parameter, but also over a (possibly multi-dimensional) space parameter. Since we are not necessarily integrating over a bounded interval (unless the weight function $g$ contains a suitable indicator function), the weight function $g$ plays the role of down-weighting the volatility for points $s$ far away in time and/or space.

Note that  the conditional characteristic function of $X$ can be expressed in terms of the accumulated volatility $V$ and  is given by
\begin{align}\label{CondChar}
\mathbb{E}(\exp(i \theta X(t))|\mathcal{F}^{\sigma})
=\exp\left(-\frac{1}{2}\theta^2V(t)\right).
\end{align}

\subsubsection{Properties of  the stochastic variance}
The volatility field $\sigma$ can be chosen in many different ways. Here 
we are interested in specifications which lead to an \emph{infinitely divisible} variance process and hence choose the following parametrisation.
\begin{condition}\label{V1}
 Suppose the stochastic volatility field is given by a mixed moving average of the form
\begin{align*}
\sigma^2(s) = \int_{\mathcal{X}^{\sigma} \times\mathbb{R}^d}h(y,s-u)L^{\sigma}(dy,du),
\end{align*}
where $L^{\sigma}$ is a L\'{e}vy basis with a L\'{e}vy seed given by a square-integrable subordinator with CQ $(a^{\sigma},0, \nu^{\sigma}, p^{\sigma}\otimes \leb)$,
where we assume that  $\delta =a^{\sigma}-\int_{|w|\leq 1}w\nu^{\sigma}(dw)= 0$ and 
 where $p^{\sigma}$ is a probability measure and $h$ is a positive weight function satisfying the integrability conditions given in \cite{RajRos89}, i.e.~
 \begin{multline}
\label{IntCond}
 \int_{\mathcal{X^{\sigma}}\times \mathbb{R}^d} \left| h(y,z) a^{\sigma} + \int_0^{\infty} \left(\mathbb{I}_{[-1,1]}(w h(y,z))- h(y,z)\mathbb{I}_{[-1,1]}(w) \right)\nu^{\sigma}(dw)\right| p^{\sigma}(dy)dz < \infty,\\
 \int_{\mathcal{X}^{\sigma}\times \mathbb{R}^d}  \int_0^{\infty}\min(1, |h(y,z)w|^2)\nu^{\sigma}(dw) p^{\sigma}(dy)dz < \infty.
\end{multline}
\end{condition}

Note that the moment generating function of $\sigma^2$ (provided it exists) is given by
\begin{align}\nonumber
 \mathbb{E}(\exp(\theta \sigma^2(s)))&= \exp(K_{L^{\sigma}}(\theta)), 
 \,\,\,\,
 \text{where } \\
 \,  
K_{L^{\sigma}}(\theta)
&=\int_{\mathcal{X}^{\sigma}\times \mathbb{R}^d}  \int_0^{\infty}\left(e^{\theta h(x,z) w} -1 \right)\nu^{\sigma}(dw)p^{\sigma}(dx)dz, \label{K}
\end{align}
denotes the kumulant generating function of the L\'{e}vy seed associated with $L^{\sigma}$.

In the following, we will derive a representation result for the variance term which appears in the variance mixture with the Gaussian L\'{e}vy basis.

\begin{condition}\label{V2}
Assume that Condition \ref{V1} holds. Further, assume that for all $y \in \mathcal{X}, u \in \mathbb{R}^d$ we have
\begin{align*}
k(y,t-u):= \int_{\mathcal{X}\times \mathbb{R}^d}g^2(x,t-s)h(y,s-u)p(dx)ds < \infty,
\end{align*}
where $k(y,z)= (\widetilde g* h(y, \cdot))(z)$ is the convolution of $\widetilde g$ and $h$ with 
 $\widetilde g(z):=\int_{\mathcal{X}}g^2(x,z)p(dx)$. Also, we assume that $k$ satisfies the integrability conditions \eqref{IntCond} when we replace $h$ by $k$. 
\end{condition}
\begin{proposition}
Under Condition \ref{V2}, the stochastic variance field $V$ can be represented as
\begin{align*}
V(t) &= \int_{\mathcal{X}^{\sigma} \times \mathbb{R}^d}k(y,t-u) L^{\sigma}(dy,du).
\end{align*}
Also, 
$V$ is stationary and its marginal distribution is infinitely divisible.
Moreover, the Laplace transform is given by 
\begin{align}\label{LaplaceV}
\mathbb{E}(\exp(-\theta V(t)))=\exp(\Lambda_V(\theta)), \quad \text{where } 
\Lambda_V(\theta)&= 
\int_0^{\infty}\left(e^{-\theta x} -1\right)U(dx),  \quad \theta > 0,
\end{align}
where $U(dx)=\int_{\mathbb{R}^d} \int_{\mathcal{X}^{\sigma} }\nu^{\sigma}(dx (k(y,z))^{-1}) p^{\sigma}(dy)dz$ is a L\'{e}vy measure on $[0, \infty)$.
\end{proposition}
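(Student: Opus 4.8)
\emph{Representation.} The plan is to establish the three assertions sequentially, deriving the kernel representation first and then reading off stationarity, infinite divisibility and the Laplace transform from it. I would start from the definition \eqref{V} of the stochastic variance and insert the mixed moving average form of $\sigma^2$ given in Condition \ref{V1}. This yields a double integral in which the deterministic integration against $g^2(x,t-s)\,p(dx)\,ds$ sits outside the stochastic integration against $L^\sigma$. The heart of the argument is to interchange these two integrations, after which the inner deterministic integral collapses to the convolution $k(y,t-u)$ and produces $V(t)=\int_{\mathcal{X}^\sigma\times\mathbb{R}^d}k(y,t-u)\,L^\sigma(dy,du)$. The main obstacle is the rigorous justification of this interchange: I would invoke a stochastic Fubini theorem for integrals with respect to a \Levy basis, as available in the framework of \cite{RajRos89} and \cite{BNBV2011,BNBV2012RecAdv}, whose hypotheses reduce to the finiteness of $k$ assumed in Condition \ref{V2} together with a suitable absolute integrability estimate permitting $g^2\sigma^2$ to be passed through the stochastic integral. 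That the resulting integrand $k$ is itself $L^\sigma$-integrable, so that the right-hand side is well defined, follows from \citet[Theorem 2.7]{RajRos89}, since Condition \ref{V2} assumes $k$ satisfies the integrability conditions \eqref{IntCond} with $h$ replaced by $k$.

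\emph{Stationarity and infinite divisibility.} Once the representation is in hand, stationarity is immediate: the \Levy seed of $L^\sigma$ is location independent and its control measure factorises as $p^\sigma\otimes\leb$, so $L^\sigma$ is homogeneous in the $u$-direction; since the kernel $k(y,t-u)$ depends on $(t,u)$ only through $t-u$, a shift of $t$ is absorbed by the translation invariance of the Lebesgue measure, leaving all finite-dimensional laws unchanged. Infinite divisibility of each marginal $V(t)$ is a structural property of integrals against a \Levy basis and requires no further work.

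\emph{Laplace transform.} For the Laplace transform I would use the cumulant formula underlying \eqref{LevKhii}: for an integrable deterministic integrand $f$ one has $\log\mathbb{E}\exp(-\theta\int f\,dL^\sigma)=\int K(-\theta f(z),(L^\sigma)'(z))\,c(dz)$. Because the \Levy seed is a driftless subordinator — the assumption $\delta=0$ removes the drift and there is no Gaussian part — this reproduces exactly the structure of \eqref{K} with $h$ replaced by $k$ and $\theta$ by $-\theta$, giving $\Lambda_V(\theta)=\int_{\mathbb{R}^d}\int_{\mathcal{X}^\sigma}\int_0^\infty\bigl(e^{-\theta k(y,z)w}-1\bigr)\nu^\sigma(dw)\,p^\sigma(dy)\,dz$ after using translation invariance to relabel the spatial variable. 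The final step is the substitution $x=k(y,z)w$ in the innermost integral, which rewrites it against the image of $\nu^\sigma$ under $w\mapsto k(y,z)w$ and, on collecting terms, delivers \eqref{LaplaceV} with $U$ as stated. To close I would verify that $U$ is a genuine \Levy measure on $[0,\infty)$, namely $\int_0^\infty\min(1,x)\,U(dx)<\infty$ with no atom at the origin, which follows from the integrability conditions \eqref{IntCond} imposed on $k$ in Condition \ref{V2}.
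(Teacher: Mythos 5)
Your proposal is correct and follows essentially the same route as the paper: a stochastic Fubini interchange to obtain the kernel representation $V(t)=\int k(y,t-u)\,L^{\sigma}(dy,du)$, the cumulant formula for deterministic integrands against the driftless subordinator-type basis to get the transform, and the substitution $x=k(y,z)w$ to identify $U$. The only cosmetic differences are that the paper cites a specific stochastic Fubini theorem for \Levy bases (Barndorff-Nielsen and Basse-O'Connor) and works with the characteristic function before passing to the Laplace transform, whereas you fill in the stationarity argument and the verification that $U$ is a \Levy measure, both of which the paper leaves as straightforward exercises.
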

\begin{proof}
We apply the stochastic Fubini theorem for L\'{e}vy bases, cf.~\citet[Theorem 3.1]{BNBasse2009} and obtain  
\begin{align*}
V(t) &=  \int_{\mathcal{X}^{\sigma}  \times \mathbb{R}^d}\left(  \int_{\mathcal{X}\times\mathbb{R}^d} g^2(x,t-s)h(y,s-u)p(dx)ds\right) L^{\sigma}(dy,du)\\
&= \int_{\mathcal{X}^{\sigma}  \times \mathbb{R}^d}k(y,t-u) L^{\sigma}(dy,du).
\end{align*}
Moreover, we have
\begin{align*}
\mathbb{E}(\exp(i\theta V(t)))
&= 
\exp\left(\int_{\mathbb{R}^d} \int_{\mathcal{X}^{\sigma}  }\int_0^{\infty}\left(e^{i\theta k(y,t-u) w} -1\right)\nu^{\sigma}(dw) p^{\sigma}(dy)du\right)\\
&= 
\exp\left(\int_{\mathbb{R}^d} \int_{\mathcal{X}^{\sigma}  }\int_0^{\infty}\left(e^{i\theta k(y,z) w} -1\right)\nu^{\sigma}(dw) p^{\sigma}(dy)dz\right)\\
&= 
\exp\left(\int_0^{\infty}\left(e^{i\theta x} -1\right)U(dx)\right), 
\end{align*}
where we define $U(dx)=\int_{\mathbb{R}^d} \int_{\mathcal{X}^{\sigma} }\nu^{\sigma}(dx (k(y,z))^{-1}) p^{\sigma}(dy)dz$. It is an easy exercise to show that $U$ is indeed a \Levy measure on $[0,\infty)$. The result for the Laplace transform follows.
The proof of the stationarity is  a straightforward computation and hence omitted.
\end{proof}

\subsubsection{Marginal distribution}
\begin{proposition}
Under Condition \ref{V2}, $X$ given by \eqref{VMMMA} is a stationary random field whose marginal distribution is  infinitely divisible and  belongs to the class of type G distributions. 
Moreover, 
\begin{align}\begin{split}\label{CharFct}
\mathbb{E}(\exp(i\theta X(t)))&= \exp(-\Psi(\theta^2/2)), \; \text{ where }\\
\Psi(\zeta)&=-\int_0^{\infty}\left(e^{-\zeta x} -1\right)U(dx)=-\Lambda_V(\zeta), \end{split}
\end{align}
where the function $\Psi$ satisfies $\Psi(0)=0$ and has a completely monotone derivative on $(0,\infty)$
\end{proposition}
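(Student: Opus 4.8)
The plan is to obtain the unconditional characteristic function of $X(t)$ by conditioning on the volatility, and then to read off all three assertions (type G, infinite divisibility, and the stated form of $\Psi$) from the Laplace transform of $V$ established in the previous proposition. First I would apply the tower property together with the conditional Gaussianity \eqref{CondChar}: since $X(t)\mid\mathcal{F}^{\sigma}\sim N(0,V(t))$,
\begin{align*}
\mathbb{E}(\exp(i\theta X(t))) = \mathbb{E}\big(\mathbb{E}(\exp(i\theta X(t))\mid\mathcal{F}^{\sigma})\big) = \mathbb{E}\big(\exp(-\tfrac{1}{2}\theta^2 V(t))\big).
\end{align*}
The right-hand side is precisely the Laplace transform of the nonnegative variable $V(t)$ evaluated at $\zeta=\theta^2/2$, so \eqref{LaplaceV} gives $\mathbb{E}(\exp(i\theta X(t)))=\exp(\Lambda_V(\theta^2/2))$. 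Setting $\Psi:=-\Lambda_V$ then yields $\mathbb{E}(\exp(i\theta X(t)))=\exp(-\Psi(\theta^2/2))$; moreover, stationarity of $V$ makes this independent of $t$, which combined with the strict stationarity from the Existence proposition gives the stationarity claim.

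Next I would identify the distribution. The identity above exhibits $X(t)$ as a normal variance mixture, $X(t)\stackrel{d}{=}\sqrt{V(t)}\,Z$ with $Z\sim N(0,1)$ independent of the infinitely divisible mixing variable $V(t)$ (whose infinite divisibility is exactly the content of the previous proposition); this is the defining property of a type G law, cf.~\cite{Rosinski1991}. Infinite divisibility of $X(t)$ then follows from the general fact that type G distributions with infinitely divisible mixing variable are themselves infinitely divisible, see \cite{Rosinski1991}. Alternatively, I would make this explicit: for $x>0$ one has $e^{-\theta^2 x/2}-1=\int_{\mathbb{R}}(e^{i\theta y}-1)\phi_x(y)\,dy$, where $\phi_x$ is the $N(0,x)$ density, and interchanging the order of integration in $\Lambda_V(\theta^2/2)=\int_0^{\infty}(e^{-\theta^2 x/2}-1)U(dx)$ produces the \levy-Khintchine exponent $\int_{\mathbb{R}}(e^{i\theta y}-1)\rho(dy)$ with the symmetric \Levy measure $\rho(dy)=\big(\int_0^{\infty}\phi_x(y)\,U(dx)\big)\,dy$, the linear compensator term vanishing by symmetry.

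The stated properties of $\Psi$ are then immediate. Writing $\Psi(\zeta)=\int_0^{\infty}(1-e^{-\zeta x})\,U(dx)$ gives $\Psi(0)=0$ at once. Differentiating under the integral sign, which is legitimate because $U$ is a \Levy measure on $[0,\infty)$ so that $\int_0^{\infty}x e^{-\zeta x}\,U(dx)<\infty$ for every $\zeta>0$, yields $\Psi'(\zeta)=\int_0^{\infty}x e^{-\zeta x}\,U(dx)$, which is the Laplace transform of the nonnegative measure $x\,U(dx)$ and hence completely monotone on $(0,\infty)$ by Bernstein's theorem.

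I expect the main obstacle to lie in the rigorous justification of the two interchanges: the conditioning/tower step must be compatible with the \cite{W} construction of the stochastic integral, and the Fubini step defining $\rho$ requires checking that $\rho$ is a genuine \Levy measure, i.e.~$\int_{\mathbb{R}}\min(1,y^2)\rho(dy)<\infty$. The latter reduces, via $\int_{\mathbb{R}}\min(1,y^2)\phi_x(y)\,dy\le\min(1,x)$, to the integrability $\int_0^{\infty}\min(1,x)\,U(dx)<\infty$ already guaranteed by $U$ being a \Levy measure, so both points are ultimately bookkeeping once the conditional Gaussian structure is in hand.
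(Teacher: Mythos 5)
Your proposal is correct and follows essentially the same route as the paper: condition on $\mathcal{F}^{\sigma}$ to reduce the characteristic function to the Laplace transform of $V(t)$, invoke \eqref{LaplaceV}, set $\Psi=-\Lambda_V$, and verify complete monotonicity of $\Psi'$ by differentiating under the integral. Your explicit construction of the symmetric \Levy measure $\rho(dy)=\bigl(\int_0^{\infty}\phi_x(y)\,U(dx)\bigr)dy$ is a nice supplement (the paper simply cites \citet{Rosinski1991} for infinite divisibility), but it does not change the substance of the argument.
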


\begin{proof}
Since $X$ is a variance mixture with an infinitely divisible process, $X$ has itself ID marginal distribution, cf.~\citet[p.~29]{Rosinski1991}. 
The fact that its marginal distribution is of type G follows directly from the definition of type G distribution, cf.~\cite{Marcus1987}. From \citet[Proposition 3]{Rosinski1991} we deduce the corresponding representation result for the characteristic function. More precisely, from equations  \eqref{CondChar} and \eqref{LaplaceV}  we immediately get that 
\begin{align*}
\mathbb{E}(\exp(i\theta X(t)))
&= 
\exp\left(\int_0^{\infty}\left(e^{-\frac{1}{2}\theta^2 x} -1\right)U(dx)\right)=\exp\left( \Lambda_V\left(\frac{\theta^2}{2}\right)\right). 
\end{align*}

So, we define
$\Psi(\zeta):=-\int_0^{\infty}\left(e^{-\zeta x} -1\right)U(dx)$.
Clearly, $\Psi(0)=0$ and the derivative is given by
$\Psi'(\zeta)=\int_0^{\infty}e^{-\zeta x} x U(dx)$.
Note that, by definition, the function $k$ is non-negative, so we can conclude that $\Psi'$ is indeed completely monotone  on $(0,\infty)$, since it possesses derivatives of all orders and $ (-1)^n\Psi^{(n+1)}\geq 0$, for $n \in \mathbb{N}$ and  $z > 0$,  cf.~\citet[p.~415]{Feller1966}.
\end{proof}
\subsubsection{Cumulants and correlation structure}
From the joint cumulant function, we can  derive the cumulants and correlation structure of a volatility modulated mixed moving average field.

Since $X(t)| \mathcal{F}^{\sigma} \sim N(0, V(t))$, we immediately get the following results for the conditional cumulants of $X(t)$:
\begin{align*}
\kappa_1^{\sigma}=\mathbb{E}(X(t)|\mathcal{F}^{\sigma}) = 0,
\quad
\kappa_2^{\sigma}=\Var(X(t)|\mathcal{F}^{\sigma}) &= V(t),
\quad 
\kappa_i^{\sigma}= 0, \; i \geq 3.
\end{align*}
Unconditionally, we get for the first two cumulants that 
\begin{align*}
\mathbb{E}(X(t)) = 0,
&&
\Var(X(t)) &= \mathbb{E}(V(t)).
\end{align*}
For $t, t^* \in \mathbb{R}^d$, the covariance structure is given - conditionally -- by
\begin{align*}
\Cov(X(t), X(t^*)|\mathcal{F}^{\sigma})=\mathbb{E}(X(t)X(t^*)|\mathcal{F}^{\sigma}) = \int_{\mathcal{X}\times \mathbb{R}^d}g(x,t-s)g(x, t^*-s)\sigma^2(s) p(dx) ds,
\end{align*}
and - unconditionally -- by
\begin{align*}
\Cov(X(t), X(t^*))=\mathbb{E}(X(t)X(t^*)) = \int_{\mathcal{X}\times \mathbb{R}^d}g(x,t-s)g(x, t^*-s)\mathbb{E}(\sigma^2(s)) p(dx) ds,
\end{align*}
which in the case of a second-order stationary volatility field simplifies to 
\begin{align*}
R_X(h) &:=  \Cov(X(h), X(0))= \mathbb{E}(\sigma^2(0)) \int_{\mathcal{X}\times \mathbb{R}^d}g(x,h+s)g(x, s) p(dx) ds,\\
\rho_X(h) &:=  \Cor(X(h), X(0))=  \frac{\int_{\mathcal{X}\times \mathbb{R}^d}g(x,h+s)g(x, s) p(dx) ds}{\int_{\mathcal{X}\times \mathbb{R}^d}g^2(x,s)p(dx) ds}.
\end{align*}
Interestingly, that means that the stochastic volatility component has no impact on the correlation structure. This changes, however, as soon as  higher order correlations are considered. E.g.~we have
\begin{align*}
\Cov(X^2(t), X^2(t^*)|\mathcal{F}^{\sigma})
=2\left( \int_{\mathbb{R}^d}\int_{\mathcal{X}} g(x,t-s)g(x,t^*-s)\sigma^2(s) p(dx) ds\right)^2,
\end{align*}
and unconditionally we have
\begin{multline}\label{CovX2}
\Cov(X^2(t), X^2(t^*))=
2\mathbb{E}\left( \int_{\mathbb{R}^d}\int_{\mathcal{X}} g(x,t-s)g(x,t^*-s)\sigma^2(s) p(dx) ds\right)^2\\ +
\Cov(V(t),V(t^*)).
\end{multline}
The above results are interesting since they suggest that, in practical applications, estimation of such models could follow a multi-step estimation procedure, where one uses the variogram or covariance function, see \cite{Cressie1993} to identify $g$, and then one uses a second order variogram or covariance function to identify $h$ and finally one can estimate the remaining parameters coming from the L\'{e}vy basis $L^{\sigma}$ using a method of moments or a (quasi-) likelihood approach.

\subsection{Finite dimensional distributions}
Next we study  the finite dimensional distributions of the random field $(X_t)_{t\in \mathbb{R}^d}$.
\begin{proposition}\label{FinDist}
Let $n\in \mathbb{N}$ and  $\theta_1, \dots, \theta_n \in \mathbb{R}$.
The conditional finite dimensional distributions of $(X_t)_{t\in \mathbb{R}^d}$ given $\mathcal{F}^{\sigma}$ are given by
\begin{align*}
\mathbb{E}\left.\left(\exp\left(i \sum_{j=1}^n \theta_j X(t_j) \right)\right| \mathcal{F}^{\sigma} \right)
=\exp\left(-\frac{1}{2}\int_{\mathcal{X}\times \mathbb{R}^d}\left(\sum_{j=1}^n\theta_j g(x,t_j-s)\right)^2 \sigma^2(s)p(dx) ds\right).
\end{align*}
The  finite dimensional distributions are given by
\begin{align*}
\mathbb{E}\left(\exp\left(i \sum_{j=1}^n \theta_j X(t_j) \right) \right)
=\mathbb{E}\left(\exp\left(-\frac{1}{2}\int_{\mathcal{X}\times \mathbb{R}^d}\left(\sum_{j=1}^n\theta_j g(x,t_j-s)\right)^2 \sigma^2(s)  p(dx) ds\right) \right).
\end{align*}
 Under Condition \ref{V2}, the  finite dimensional distributions are given by
\begin{multline*}
\mathbb{E}\left(\exp\left(i \sum_{j=1}^n \theta_j X(t_j) \right) \right)
\\= \exp\left(\int_{\mathcal{X} \times\mathbb{R}^d}K_{L^{\sigma}}\left(-\frac{1}{2}k\left(y,\theta, (t_k-t_j)_{j,k=1,\dots,n} ,w\right)\right)p^{\sigma}(dy) dw\right),
\end{multline*}
where $K_{L^{\sigma}}$ denotes the kumulant generating function of the L\'{e}vy seed associated with $L^{\sigma}$ defined in \eqref{K} and where $\theta=(\theta_1, \dots, \theta_d)^{\top}$ and 
\begin{align*}
k(y,\theta, (t_k-t_j)_{j,k=1,\dots,n} ,w)=\sum_{j,k=1}^n\theta_j \theta_k\int_{\mathcal{X}\times \mathbb{R}^d}g(x,v)g(x,t_k-t_j+v) h(y,-w-v)p(dx) dv.
\end{align*}
\end{proposition}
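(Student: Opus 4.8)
The plan is to establish the three displays in order, obtaining the unconditional formulae by first conditioning on $\mathcal{F}^{\sigma}$ and only afterwards integrating out the volatility.

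\emph{Conditional finite-dimensional distributions.} First I would use linearity of the integral against $W$ to write $\sum_{j=1}^n\theta_j X(t_j)=\int_{\mathcal{X}\times\mathbb{R}^d} f_\theta(x,s)\,W(dx,ds)$ with the single integrand $f_\theta(x,s):=\bigl(\sum_{j=1}^n\theta_j g(x,t_j-s)\bigr)\sigma(s)$. Conditionally on $\mathcal{F}^{\sigma}$ the field $\sigma$ is frozen, so $f_\theta$ is a \emph{deterministic} function, and it is square-integrable by Condition \ref{I} together with the elementary bound $\bigl(\sum_j\theta_j g(x,t_j-s)\bigr)^2\le n\sum_j\theta_j^2 g^2(x,t_j-s)$. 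Since $W$ is a standard Gaussian Lévy basis with $\mathbb{E}(\exp(i\theta W(A)))=\exp(-\tfrac12\theta^2 c(A))$ and $c(dz)=p(dx)ds$, the conditional law of this integral is centred Gaussian with variance $\int_{\mathcal{X}\times\mathbb{R}^d} f_\theta^2(x,s)\,p(dx)ds$; expanding the square yields precisely the stated conditional characteristic function.

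\emph{Unconditional finite-dimensional distributions.} This is then immediate from the tower property, $\mathbb{E}(\exp(i\sum_j\theta_j X(t_j)))=\mathbb{E}\bigl(\mathbb{E}(\exp(i\sum_j\theta_j X(t_j))\mid\mathcal{F}^{\sigma})\bigr)$, inserting the expression just derived.

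\emph{Explicit form under Condition \ref{V2}.} Here I would expand the square as the double sum $\sum_{j,k=1}^n\theta_j\theta_k\, g(x,t_j-s)g(x,t_k-s)$, substitute the representation $\sigma^2(s)=\int_{\mathcal{X}^{\sigma}\times\mathbb{R}^d}h(y,s-u)L^{\sigma}(dy,du)$ from Condition \ref{V1}, and apply the stochastic Fubini theorem for Lévy bases already invoked in the Proposition for $V$ — justified because Condition \ref{V2} imposes the integrability conditions \eqref{IntCond} on the relevant kernel — to rewrite the exponent as a single $L^{\sigma}$-integral $-\tfrac12\int_{\mathcal{X}^{\sigma}\times\mathbb{R}^d}\widetilde k(y,u)\,L^{\sigma}(dy,du)$ of the deterministic kernel $\widetilde k(y,u)=\sum_{j,k}\theta_j\theta_k\int_{\mathcal{X}\times\mathbb{R}^d} g(x,t_j-s)g(x,t_k-s)h(y,s-u)\,p(dx)ds$. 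Taking the unconditional expectation then evaluates the Laplace transform of this $L^{\sigma}$-integral at the deterministic argument $-1/2$, and by the same Lévy--Khintchine computation as for $V$ (using that the seed is a subordinator with $\delta=0$) this equals $\exp\bigl(\int_{\mathcal{X}^{\sigma}\times\mathbb{R}^d} K_{L^{\sigma}}(-\tfrac12\widetilde k(y,u))\,p^{\sigma}(dy)du\bigr)$, where $K_{L^{\sigma}}(\lambda)=\int_0^{\infty}(e^{\lambda w}-1)\nu^{\sigma}(dw)$ denotes the seed kumulant. It remains to recast $\widetilde k$ in the stated difference-only form $k(y,\theta,(t_k-t_j)_{j,k},w)$ via the substitution $v=t_j-s$ in the inner integral combined with the translation invariance of the control measure $p^{\sigma}\otimes\mathrm{Leb}$ in the outer variable.

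\emph{Main obstacle.} I expect the only genuine difficulty to be this last reduction. The substitution $v=t_j-s$ that turns $g(x,t_j-s)$ into $g(x,v)$ and the second factor into $g(x,t_k-t_j+v)$ simultaneously shifts the argument of $h$ by $t_j$, and this shift \emph{differs across the double sum}; consequently the passage to a single outer variable $w$ and to dependence on the pairwise differences $t_k-t_j$ alone cannot be performed by a naive term-by-term change of variables inside the nonlinear $K_{L^{\sigma}}$, but must instead exploit the shift-invariance of $L^{\sigma}$'s control measure applied to the whole kernel at once. Pinning down the precise relation between $u$ and $w$ that makes $\widetilde k(y,u)$ coincide with $k(y,\theta,(t_k-t_j)_{j,k},w)$ — and checking that the various integrability requirements legitimising Fubini and the interchange of expectation with the $L^{\sigma}$-integral hold under Condition \ref{V2} — is where the care is needed; everything else is routine.
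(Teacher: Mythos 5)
Your proposal follows the paper's proof essentially step for step: the first display comes from the conditional Gaussianity of the integral against $W$ given $\mathcal{F}^{\sigma}$ (the paper simply cites \citet[Proposition 3.4.2]{STaqqu1994} for the first two displays, whereas you compute the conditional variance directly --- same content), the second display is the tower property, and the third is obtained, exactly as you describe, by inserting the representation of $\sigma^2$ from Condition \ref{V1}, applying the stochastic Fubini theorem to collapse the exponent into $-\tfrac12\int \widetilde k(y,u)\,L^{\sigma}(dy,du)$ with $\widetilde k(y,u)=\sum_{j,k}\theta_j\theta_k\int g(x,t_j-s)g(x,t_k-s)h(y,s-u)\,p(dx)\,ds$, and then evaluating this deterministic $L^{\sigma}$-integral through the kumulant functional.

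The one step you decline to carry out --- passing from $\widetilde k(y,u)$ to the difference-only kernel $k(y,\theta,(t_k-t_j)_{j,k},w)$ --- is handled in the paper by substituting $v=t_j-s$ term by term, which produces $h(y,t_j-v-u)$, and then rewriting this as $h(y,-w-v)$; that amounts to the relabelling $w=u-t_j$, whose shift varies with $j$ across the double sum. It is therefore not a single change of variables in the outer $L^{\sigma}$-integral, and your ``main obstacle'' is precisely the weak point of the paper's own argument. Concretely, writing $\chi(a,b)=\int_{\mathcal{X}\times\mathbb{R}^d} g(x,a-s)g(x,b-s)h(y,s)\,p(dx)\,ds$, one has $\widetilde k(y,u)=\sum_{j,k}\theta_j\theta_k\,\chi(t_j-u,t_k-u)$, while the paper's kernel equals $\sum_{j,k}\theta_j\theta_k\,\chi(-w,\,t_k-t_j-w)$; a single shift $w=u-t_1$ turns the former into a function of $w$ and the differences $t_j-t_1$ alone (consistent with stationarity), but it does not coincide with the latter unless $\chi(a,b)$ depends only on $a-b$ or $n=1$, and translation invariance of the control measure cannot repair this once the nonlinear $K_{L^{\sigma}}$ is applied. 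So your instinct is right: the safe version of the third display keeps $\widetilde k(y,u)$ inside $K_{L^{\sigma}}$, and the reduction to the displayed $k(y,\theta,\cdot,w)$ requires either an additional assumption or a correction. Apart from this difficulty, which your write-up identifies rather than creates, your argument is correct and coincides with the paper's.
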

\begin{proof}
The first two results are  a direct consequence of  \citet[Proposition 3.4.2]{STaqqu1994}. The third result follows from an application of the stochastic Fubini theorem, more precisely note that 
\begin{align*}
&\mathbb{E}\left(\exp\left(i \sum_{j=1}^n \theta_j X(t_j) \right) \right)
=\mathbb{E}\left(\exp\left(-\frac{1}{2}\int_{\mathcal{X}\times \mathbb{R}^d}\left(\sum_{j=1}^n\theta_j g(x,t_j-s)\right)^2 \sigma^2(s) p(dx) ds\right) \right)\\
&= \mathbb{E}\left(\exp\left(-\frac{1}{2}\int_{\mathcal{X}\times \mathbb{R}^d}\left(\sum_{j=1}^n\theta_j g(x,t_j-s)\right)^2 \int_{\mathcal{X}^{\sigma} \times\mathbb{R}^d}h(y,s-u)L^{\sigma}(dy,du) p(dx) ds\right) \right)\\
&= \mathbb{E}\left(\exp\left(\int_{\mathcal{X}^{\sigma} \times\mathbb{R}^d}\left\{\int_{\mathcal{X}\times \mathbb{R}^d}-\frac{1}{2}\left(\sum_{j=1}^n\theta_j g(x,t_j-s)\right)^2 h(y,s-u)p(dx) ds \right\} L^{\sigma}(dy,du)\right) \right),
\end{align*}
where a change of variable argument leads to 
\begin{align*}
&\int_{\mathcal{X}^{\sigma} \times\mathbb{R}^d}\left(\int_{\mathcal{X}\times \mathbb{R}^d}\left(\sum_{j=1}^n\theta_j g(x,t_j-s)\right)^2 h(y,s-u)p(dx) ds  \right)L^{\sigma}(dy,du)\\
&=
\int_{\mathcal{X}^{\sigma} \times\mathbb{R}^d}\left(\int_{\mathcal{X}\times \mathbb{R}^d}\sum_{j,k=1}^n\theta_j \theta_kg(x,t_j-s)g(x,t_k-s) h(y,s-u)p(dx) ds \right )L^{\sigma}(dy,du)\\
&=
\int_{\mathcal{X}^{\sigma} \times\mathbb{R}^d}\left(\int_{\mathcal{X}\times \mathbb{R}^d}\sum_{j,k=1}^n\theta_j \theta_kg(x,v)g(x,t_k-t_j+v) h(y,t_j-v-u)p(dx) dv \right )L^{\sigma}(dy,du)\\
&=
\int_{\mathcal{X}^{\sigma} \times\mathbb{R}^d}k(y,\theta, (t_k-t_j)_{j,k=1,\dots,n} ,w)L^{\sigma}(dy,dw),
\end{align*}
where $\theta=(\theta_1, \dots, \theta_d)^{\top}$ and 
\begin{align*}
k(y,\theta, (t_k-t_j)_{j,k=1,\dots,n} ,w)=\sum_{j,k=1}^n\theta_j \theta_k\int_{\mathcal{X}\times \mathbb{R}^d}g(x,v)g(x,t_k-t_j+v) h(y,-w-v)p(dx) dv.
\end{align*}
Hence, we have
\begin{align*}
&\mathbb{E}\left(\exp\left(i \sum_{j=1}^n \theta_j X(t_j) \right) \right)\\
&= \mathbb{E}\left(\exp\left(-\frac{1}{2}\int_{\mathcal{X}^{\sigma} \times\mathbb{R}^d}k(y,\theta, (t_k-t_j)_{j,k=1,\dots,n} ,w)  L^{\sigma}(dy,du)\right) \right)\\
&=\exp\left(\int_{\mathcal{X} \times\mathbb{R}^d}K_{L^{\sigma}}\left(-\frac{1}{2}k(y,\theta, (t_k-t_j)_{j,k=1,\dots,n} ,w)\right)p^{\sigma}(dy) dw\right),
\end{align*}
where $K_{L^{\sigma}}$ denotes the kumulant function defined in \eqref{K}.
\end{proof}

\begin{remark}
It is an immediate consequence of Proposition \ref{FinDist} that $X$ is stationary if and only if $(\sigma(t))_{t\in \mathbb{R}^d}$ is stationary.
\end{remark}


\section{Parameterising  the weight function of a  VM(M)MA field}
 \label{SectCor}
 So far, we have only required that the weight function $g$ satisfies a square  integrability condition, but have not commented much on particular functional forms - apart from few examples given in Section \ref{SectVMMMA}. In this section, we will now discuss how parametric classes of weight functions can be derived which are relevant for applications.
 
 We distinguish two approaches: First, in the context of (tempo-) spatial models, many stochastic models focus on modelling the covariance function directly, see \cite{Cressie1993, CressieWikle2011} for textbook treatments. Motivated by this branch of the literature, we will show how a weight function can be constructed which reproduces a given correlation function.
Second, we will discuss that the weight function can be related to the Green's function in certain stochastic partial differential equations and we will study some concrete examples in the context of a VMMMA field on the plane. 
\subsection{Starting from  the covariance function}\label{SectRelMMAMA}
We start off by investigating the 
 relationship between the weight function $g$ and the covariance function $R_X$ via $L^2$-Fourier transforms. In particular, we have the following result.
\begin{proposition}\label{CorRep}
The autocorrelation  function of the VMMA field defined in  \eqref{VMMMA} satisfies 
\begin{align*}
\rho_X(h) &\propto  \frac{1}{(2\pi)^{d/2}}
\int_{\mathbb{R}^d}e^{ih^{\top}u}\gamma(u) du,
\end{align*}
for $\gamma \in L^1(\leb)$ with 
$\gamma(u):=\int_{\mathcal{X}}|\widehat g(x,u)|^2p(dx)$ and, for $u \in \mathbb{R}$,  $\widehat g(x, u)$ denotes the $L^2$-Fourier-transform of $g(x, \cdot)$. Also, $\gamma$ is proportional to the corresponding spectral density of $X$.
\end{proposition}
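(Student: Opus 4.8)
The plan is to start from the explicit formula for the autocorrelation of a VMMA field established in the subsection on cumulants and correlation structure, namely
$$\rho_X(h) = \frac{\int_{\mathcal{X}\times\mathbb{R}^d} g(x,h+s)g(x,s)\,p(dx)\,ds}{\int_{\mathcal{X}\times\mathbb{R}^d} g^2(x,s)\,p(dx)\,ds},$$
and to recognise the numerator as the $p$-average over $\mathcal{X}$ of the autocorrelation (cross-correlation) of the section $g(x,\cdot)$ at lag $h$. The key analytic device is the correlation theorem for the $L^2$-Fourier transform: for real $f\in L^2(\mathbb{R}^d)$ one has $\int_{\mathbb{R}^d} f(s+h)f(s)\,ds = \int_{\mathbb{R}^d} e^{ih^\top u}|\widehat f(u)|^2\,du$, which follows from Parseval's identity $\int \phi\,\overline{\psi} = \int \widehat\phi\,\overline{\widehat\psi}$ applied to $\phi=f(\cdot+h)$, $\psi=f$, together with the translation rule $\widehat{f(\cdot+h)}(u)=e^{ih^\top u}\widehat f(u)$.

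First I would verify the hypotheses under which the Fourier machinery applies. Condition \ref{I}, specifically the square-integrability requirement \eqref{I2Cond}, together with the stationarity of $\sigma$ (so that $\mathbb{E}(\sigma^2(s))$ is a positive constant), gives $\int_{\mathcal{X}\times\mathbb{R}^d} g^2(x,s)\,p(dx)\,ds<\infty$. By Tonelli this forces $g(x,\cdot)\in L^2(\mathbb{R}^d)$ for $p$-almost every $x$, so that $\widehat g(x,\cdot)$ is a well-defined $L^2$ function and Plancherel gives $\int_{\mathbb{R}^d} g^2(x,s)\,ds = \int_{\mathbb{R}^d}|\widehat g(x,u)|^2\,du$ for $p$-a.e.\ $x$. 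Applying the correlation theorem sectionwise then yields $\int_{\mathbb{R}^d} g(x,h+s)g(x,s)\,ds = \int_{\mathbb{R}^d} e^{ih^\top u}|\widehat g(x,u)|^2\,du$.

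Next I would integrate this identity over $\mathcal{X}$ against $p(dx)$ and invoke Fubini to interchange the $x$- and $u$-integrations, which is legitimate precisely because $\gamma(u)=\int_{\mathcal{X}}|\widehat g(x,u)|^2\,p(dx)$ is assumed to lie in $L^1(\leb)$; this produces $\int_{\mathcal{X}\times\mathbb{R}^d} g(x,h+s)g(x,s)\,p(dx)\,ds = \int_{\mathbb{R}^d} e^{ih^\top u}\gamma(u)\,du$. The denominator is handled the same way with $h=0$: integrating the Plancherel identity over $\mathcal{X}$ gives $\int_{\mathcal{X}\times\mathbb{R}^d} g^2(x,s)\,p(dx)\,ds = \int_{\mathbb{R}^d}\gamma(u)\,du$, a finite positive constant independent of $h$. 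Dividing, $\rho_X(h) = \bigl(\int_{\mathbb{R}^d}\gamma(u)\,du\bigr)^{-1}\int_{\mathbb{R}^d} e^{ih^\top u}\gamma(u)\,du$, which is the claimed proportionality (the normalising factor $(2\pi)^{-d/2}$ is absorbed into the proportionality constant). For the spectral-density statement I would compare this with the defining relation $R_X(h)=\int_{\mathbb{R}^d} e^{ih^\top u} f_X(u)\,du$ of the spectral density $f_X$; since $R_X(h)$ equals $\mathbb{E}(\sigma^2(0))\int_{\mathbb{R}^d} e^{ih^\top u}\gamma(u)\,du$ by the same computation applied to the unnormalised covariance, uniqueness of the spectral representation identifies $f_X$ with $\gamma$ up to the constant $\mathbb{E}(\sigma^2(0))$.

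I expect the main obstacle to be the careful justification of the interchanges of integration and of the $L^2$-Fourier inversion, rather than the formal computation itself. In particular, the correlation theorem must be applied to each section via the $L^2$ (not $L^1$) Fourier transform, so one should argue through Plancherel and a truncation/density argument rather than through pointwise manipulation of the defining integral; and the Fubini step over $\mathcal{X}\times\mathbb{R}^d$ rests essentially on the standing assumption $\gamma\in L^1(\leb)$, which simultaneously guarantees that the inverse transform $\int_{\mathbb{R}^d} e^{ih^\top u}\gamma(u)\,du$ is well-defined and continuous in $h$ and that the order of integration may be exchanged.
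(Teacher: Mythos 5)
Your proposal is correct and follows essentially the same route as the paper's proof: express $R_X(h)$ as the $p$-average over $\mathcal{X}$ of the sectionwise autocorrelation of $g(x,\cdot)$, convert each section via the $L^2$ correlation/convolution theorem into $\int e^{ih^\top u}|\widehat g(x,u)|^2\,du$, interchange the $x$- and $u$-integrations, and read off $\gamma\in L^1(\leb)$ from Plancherel. The only differences are presentational (you make the Tonelli/Fubini justifications and the denominator case $h=0$ explicit, and you absorb the $(2\pi)^{-d/2}$ convention constant into the proportionality), none of which changes the argument.
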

\begin{proof}
To simplify the exposition, we will in in the following assume that $\mathbb{E}(\sigma^2(0))=1$. 
Then 
\begin{align*}
R_X(h) &=   \int_{\mathcal{X}\times \mathbb{R}^d}g(x,h+s)g(x, s) p(dx) ds = \int_{\mathcal{X}}\left(\int_{\mathbb{R}^d}g(x,h+s)g(x, s)ds\right) p(dx).
\end{align*}
Recall that for $x\in \mathcal{X}$, we have that $g(x, \cdot) \in L^2(\leb)$. Now, for $u \in \mathbb{R}$ let $\widehat g(x, u)$ denote the Fourier-transform of $g(x, \cdot)$ in $L^2$.  Then we have
\begin{align*}
\int_{\mathbb{R}^d}g(x,h+s)g(x, s)ds &= \frac{1}{(2\pi)^{d/2}}\int_{\mathbb{R}^d}e^{ih^{\top}u}|\widehat g(x,u)|^2du,
\end{align*}
since
\begin{align*}
\frac{1}{(2\pi)^{d/2}}\int_{\mathbb{R}^d}e^{ih^{\top}u}|\widehat g(x,u)|^2du
&=\frac{1}{(2\pi)^{d/2}}\int_{\mathbb{R}^d}e^{ih^{\top}u}\widehat g(x,u) \overline{\widehat g(x,u)}du\\
&=\frac{1}{(2\pi)^{d/2}}\int_{\mathbb{R}^d}e^{ih^{\top}u}\widehat g(x,u) \widehat g(x,-u)du
\\
&=  \int_{\mathbb{R}^d}g(x,u) g(x,u-h)du =\int_{\mathbb{R}^d}g(x,h+s)g(x, s)ds,
\end{align*}
see e.g.~\citet[Section 23.3.5]{GasquetWitomski1999} for properties of convolutions of $L^2$-Fourier transforms.
Now we define $\gamma(u):=\int_{\mathcal{X}}|\widehat g(x,u)|^2p(dx)$, then 
\begin{align*}
R_X(h) &=   \int_{\mathcal{X}}\left(\frac{1}{(2\pi)^{d/2}}\int_{\mathbb{R}^d}e^{ih^{\top}u}|\widehat g(x,u)|^2du\right) p(dx)\\
&=\frac{1}{(2\pi)^{d/2}}
\int_{\mathbb{R}^d}e^{ih^{\top}u} \left(\int_{\mathcal{X}}|\widehat g(x,u)|^2  p(dx)\right) du
=\frac{1}{(2\pi)^{d/2}}
\int_{\mathbb{R}^d}e^{ih^{\top}u}\gamma(u) du.
\end{align*}
Note that $\int_{\mathbb{R}^d}\gamma(u)du=\int_{\mathbb{R}^d\times \mathcal{X}}|\widehat g(x,u)|^2p(dx)du <\infty$,
hence
 $\gamma \in L^1(\leb)$ is a non-negative function  and  $\gamma^{1/2} \in L^2(\leb)$.
Hence, from the representation above 
we see that $u\mapsto\frac{1}{(2\pi)^{d/2}}\gamma(u)$ is proportional to the  
corresponding spectral density of $X$.
\end{proof}

Next we present a general method for constructing a weight function which can reproduce a given covariance function. 
E.g.~suppose we are given a covariance function $R(h)\in L^1(\leb)$ and we would like to find a function $f\in L^2(\leb)$ such that 
$R(h) = \int_{\mathbb{R}^d}f(h+s)f(s) ds$. 
\begin{proposition}\label{Rhotof}
Suppose $R(h)\in L^1(\leb)$ is a covariance function with spectral density (up to a factor) given by $u\mapsto \gamma(u)$.  
\begin{enumerate}
\item  Suppose that  $\gamma^{1/2}_e= \sqrt{\gamma}$ is the \emph{even root} of $\gamma$. Let $f=f_e$ denote the corresponding $L^2$-Fourier transform of $\gamma_e^{1/2}$. 
 Then  $|\widehat f_e(u)|^2= \gamma(u)$.
\item Suppose that  $\gamma^{1/2}_o=-\sqrt{\gamma}$ is the \emph{odd root} of $\gamma$. Let $f=f_o$ denote the corresponding $L^2$-Fourier transform of $\gamma_o^{1/2}$.  Then 
 $|\widehat f_o(u)|^2= \gamma(u)$. 
\end{enumerate}
In both cases, we have for all $h$ that   
\begin{align*}
R(h) =\int_{\mathbb{R}^d}f(h+s)f(s) ds.
\end{align*} 

\end{proposition}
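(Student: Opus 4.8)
The plan is to reduce everything to the spectral (Plancherel) identity already established inside the proof of Proposition~\ref{CorRep}, namely that for a \emph{real} $f\in L^2(\leb)$ one has
\[
\int_{\R^d}f(h+s)f(s)\,ds=\frac{1}{(2\pi)^{d/2}}\int_{\R^d}e^{ih^{\top}u}|\widehat f(u)|^2\,du,
\]
so that it suffices to manufacture an $f\in L^2(\leb)$ with $|\widehat f|^2=\gamma$ and then recognise the right-hand side as the inverse Fourier transform of the spectral density.

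First I would record the structural facts about $\gamma$. Since $R$ is a real covariance function it is even, $R(h)=R(-h)$, and positive definite; hence (Bochner, combined with $R\in L^1(\leb)$) its Fourier transform $\gamma$ is a non-negative, even, bounded continuous function, and the spectral measure $\gamma(u)\,du$ is finite with total mass proportional to $R(0)<\infty$. In particular $\int_{\R^d}\gamma(u)\,du<\infty$, so $\gamma\in L^1(\leb)$ and therefore $\sqrt{\gamma}\in L^2(\leb)$. This is exactly what makes both $f_e$ and $f_o$ well defined: $\gamma^{1/2}_e=\sqrt{\gamma}$ and $\gamma^{1/2}_o=-\sqrt{\gamma}$ both lie in $L^2(\leb)$, so each admits an $L^2$-Fourier transform in $L^2(\leb)$.

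Next I would verify $|\widehat f|^2=\gamma$ in each case. Writing $\psi$ for the chosen root ($\psi=\sqrt{\gamma}$ or $\psi=-\sqrt{\gamma}$), $f$ is by definition the $L^2$-Fourier transform of $\psi$, so $\widehat f$ is the Fourier transform applied twice to $\psi$. By $L^2$-Fourier inversion, with the symmetric $(2\pi)^{-d/2}$ normalisation used in Proposition~\ref{CorRep}, applying the transform twice returns the reflection, $\widehat f(u)=\psi(-u)$. Since $\gamma$ is even, $\psi$ is even in both cases, whence $\widehat f(u)=\psi(u)$ and $|\widehat f(u)|^2=\psi(u)^2=\gamma(u)$; this proves the displayed identity in parts~(1) and~(2), and the fact that $\widehat f$ is real is consistent with $f$ being real and even. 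Feeding this into the Plancherel identity above gives
\[
\int_{\R^d}f(h+s)f(s)\,ds=\frac{1}{(2\pi)^{d/2}}\int_{\R^d}e^{ih^{\top}u}\gamma(u)\,du,
\]
and the right-hand side is precisely the inverse Fourier transform of the spectral density, i.e.\ $R(h)$, after absorbing the proportionality constant hidden in ``spectral density up to a factor'' into a rescaling of $f$. This holds for every $h$ and for both $f=f_e$ and $f=f_o$.

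The only genuinely delicate points are: (i) the justification that $\gamma\in L^1(\leb)$, hence $\sqrt{\gamma}\in L^2(\leb)$ so the roots exist, which rests on the finiteness of the spectral measure of a bona fide covariance function; and (ii) careful bookkeeping of the $L^2$-Fourier transform, which is only defined up to $\leb$-null sets, so that the identities $\widehat f(u)=\psi(u)$ and $|\widehat f|^2=\gamma$ hold $\leb$-almost everywhere and the double-transform/reflection identity must be read in the $L^2$ sense. The distinction between the even and odd roots is not an analytic obstacle: both $\pm\sqrt{\gamma}$ have modulus squared $\gamma$ and are even, so the two cases follow from the identical argument and differ only by $f_o=-f_e$; their role is merely to record the (phase) non-uniqueness of the factorisation $|\widehat f|^2=\gamma$ subject to $f$ being real.
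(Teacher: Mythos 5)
Your proof is correct and follows essentially the same route as the paper: Bochner's theorem gives $\gamma\in L^1(\leb)$ and hence $\gamma^{1/2}\in L^2(\leb)$, the double $L^2$-Fourier transform is the reflection $\widehat f(u)=\psi(-u)$, evenness of $\gamma$ yields $|\widehat f|^2=\gamma$, and the convolution identity from Proposition~\ref{CorRep} closes the argument. If anything, your observation that both $\pm\sqrt{\gamma}$ are even functions differing only by sign is cleaner than the paper's ``odd root'' bookkeeping, which contains a small sign slip that is harmless only because the modulus is squared.
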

\begin{proof}
According to Bochner's theorem, the covariance function can be represented as 
$R(h) = \frac{1}{(2\pi)^{d/2}}
\int_{\mathbb{R}^d}e^{ih^{\top}u} \gamma(u) du$,
where $\gamma$ is an even function which is proportional to the corresponding spectral density. Note that we know that $\gamma \in L^1(\leb)$, which does not generally imply that its square root is integrable, but we know  that at least $\gamma^{1/2}\in L^2(\leb)$.
 Then the $L^2$-Fourier transform of $\gamma^{1/2}$ exists and is in the following denoted by $f$, i.e.~$f = \widehat {\gamma^{1/2}}$. 
 It is a well-known result, see e.g.~\citet[Proposition 22.2.1]{GasquetWitomski1999} that $\widehat f(u) = \gamma^{1/2}(-u)$ (a.e.) for all $x$. 
 
Suppose now that  $\gamma^{1/2}_e= \sqrt{\gamma}$. Then $\gamma^{1/2}_e(-u)= \sqrt{\gamma(-u)}= \sqrt{\gamma(u)}=\gamma^{1/2}_e(u)$ is an \emph{even root} of $\gamma$, then $f=f_e$ is even, too.  Then $\widehat f_e(u)=\gamma^{1/2}(-u)= \gamma^{1/2}(u)$ and $|\widehat f_e(u)|^2= \gamma(u)$.
Similarly, when  $\gamma^{1/2}_o=-\sqrt{\gamma}$, then $\gamma^{1/2}_o(-u)= -\sqrt{\gamma(-u)}= -\sqrt{\gamma(u)}=-\gamma^{1/2}_o(u)$ is an \emph{odd root} of $\gamma$, and $f=f_o$ is odd, too.  Then $\widehat f_o(u)=-\gamma^{1/2}(-u)= -\gamma^{1/2}(u)=\gamma_o^{1/2}(u)$ and $|\widehat f_o(u)|^2= \gamma(u)$. In both cases, we have for all $h$ that   \begin{align*}
R(h) =\frac{1}{(2\pi)^{d/2}}
\int_{\mathbb{R}^d}e^{ih^{\top}u} |\widehat f(u)|^2  du =\int_{\mathbb{R}^d}f(h+s)f(s) ds.
\end{align*} 
\end{proof}
Under stronger $L^1(\leb)$-integrability conditions on the weight function, a related result can be found in \cite{EmilThesis}.

Also, we can deduce the following result on the relation between a volatility modulated mixed moving average and a volatility modulated moving average process.
\begin{corollary}\label{corollary1}
Define the volatility modulated moving average field 
 \begin{align*}
Z=(Z_t)_{t\in \mathbb{R}^d}=\left(\int_{\mathbb{R}^d}f(t-s)\sigma(s)W(ds) \right)_{t\in \mathbb{R}^d},
\end{align*}
where the weight function $f$ is proportional to the $L^2$-Fourier transform of $\gamma^{1/2}$, where  
$\gamma(u):=\int_{\mathcal{X}}|\widehat g(x,u)|^2p(dx)$ and, for $u \in \mathbb{R}$,  $\widehat g(x, u)$ denotes the $L^2$-Fourier-transform of $g(x, \cdot)$ and $W$  denotes a Brownian motion.

Then the VMMMA field $(X_t)_{t\in\mathbb{R}^d}$ and the VMMA field  $(Z_t)_{t\in\mathbb{R}^d}$ have the identical correlation structure. Moreover, in the absence of stochastic volatility, the VMMMA and the VMMA field have identical finite dimensional distributions.
\end{corollary}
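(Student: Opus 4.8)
The plan is to treat the two assertions separately, handling the correlation statement first (since it holds even with stochastic volatility present) and then specialising to the Gaussian case for the finite dimensional distributions.

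For the correlation structure, I would start from the spectral representation of $\rho_X$ supplied by Proposition \ref{CorRep}, namely $\rho_X(h) \propto \frac{1}{(2\pi)^{d/2}}\int_{\mathbb{R}^d} e^{ih^{\top}u}\gamma(u)\,du$ with $\gamma(u)=\int_{\mathcal{X}}|\widehat g(x,u)|^2 p(dx)$. On the other side, since $Z$ is a volatility modulated \emph{moving average} field (the degenerate case in which $\mathcal{X}$ is a single point), the covariance formulae of Section \ref{SectProbVMMMA} give $R_Z(h)=\mathbb{E}(\sigma^2(0))\int_{\mathbb{R}^d} f(h+s)f(s)\,ds$. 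By the construction of $f$ together with Proposition \ref{Rhotof} we have $|\widehat f(u)|^2=\gamma(u)$, so the same Plancherel/convolution identity used in the proof of Proposition \ref{CorRep} yields $\int_{\mathbb{R}^d} f(h+s)f(s)\,ds=\frac{1}{(2\pi)^{d/2}}\int_{\mathbb{R}^d} e^{ih^{\top}u}\gamma(u)\,du$. Dividing by the value at $h=0$ to normalise then shows $\rho_Z(h)=\rho_X(h)$ for every $h$, which is the first claim.

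For the finite dimensional distributions I would set $\sigma$ equal to a constant (absence of stochastic volatility). Then both $X$ and $Z$ are stochastic integrals of deterministic square-integrable weight functions against the Gaussian \Levy basis (Brownian motion) $W$, hence each is a centred Gaussian random field and is therefore determined by its covariance function alone. The point to verify is that the full covariances --- not merely the correlations --- agree. Here I would compute the two variances by Plancherel's theorem: $R_X(0)=\int_{\mathcal{X}\times\mathbb{R}^d}g^2(x,s)p(dx)\,ds=\int_{\mathbb{R}^d}\gamma(u)\,du$ and $R_Z(0)=\int_{\mathbb{R}^d}f^2(s)\,ds=\int_{\mathbb{R}^d}|\widehat f(u)|^2\,du=\int_{\mathbb{R}^d}\gamma(u)\,du$, so (taking the proportionality constant in the definition of $f$ equal to one) $R_X\equiv R_Z$ as functions. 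Since two centred stationary Gaussian fields with the same covariance function have identical finite dimensional distributions, the second claim follows.

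The Fourier and convolution manipulations are routine and are already carried out in the proofs of Propositions \ref{CorRep} and \ref{Rhotof}, so they can simply be invoked. The one step deserving care --- and the main obstacle --- is the upgrade from \emph{equal correlation} to \emph{equal covariance} required for the finite dimensional distributions: one must confirm that the normalising variances coincide (the Plancherel computation above) and fix the proportionality constant in $f$ accordingly, after which Gaussianity in the volatility-free case does the rest. It is also worth recording why the fdd claim is necessarily restricted to the case without stochastic volatility: once $\sigma$ is genuinely random the fields are only of type G, and their higher-order structure --- for instance the quantity $\Cov(X^2(t),X^2(t^*))$ computed earlier --- depends on $g$ and $f$ beyond the single matched quantity $\gamma=\int_{\mathcal{X}}|\widehat g(x,\cdot)|^2 p(dx)$, so equality of the full laws cannot be expected in that regime.
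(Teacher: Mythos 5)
Your proof is correct and takes essentially the approach the paper intends: the paper gives no written proof, remarking only that the correlation claim is ``a direct consequence of our above derivations'' (i.e.\ Propositions \ref{CorRep} and \ref{Rhotof}) and citing \cite{Surgailisetal1993} for the finite-dimensional-distribution claim in the Gaussian case, and your argument is exactly the natural fleshing-out of those references. Your explicit attention to upgrading equal correlation to equal covariance via the Plancherel computation of the variances (and fixing the proportionality constant in $f$) addresses a detail the paper glosses over, and it is handled correctly.
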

Note that the latter result has already been mentioned in  \citet[p.548--549]{Surgailisetal1993} and the former result is a direct consequence of our above derivations.

In this section, we have shown how one can construct a suitable  weight function for a VM(M)MA process which can reproduce a given correlation structure. However, it is important to note that the  
covariance function does not specify the weight function uniquely. Also, the covariance function  does not give any indication about whether  the underlying random field is a mixed moving average or just a moving average field.

\subsection{Starting from a stochastic partial differential equation}
An interesting alternative to modelling via the second-order structure is to find a suitable (stochastic) differential equation which can describe the empirical object under investigation. 

To simplify the exposition, let us in the following focus exclusively on VMMA fields (rather than on VMMMA fields). 
\cite{BNBV2011} have recently discussed the relation between certain types of VMMA fields and solutions to certain types of stochastic partial differential equations. One of their key results -- subject to appropriate regularity conditions --    was that  if 
the weight function $g$ is chosen to be a Green's function of a certain type of an  SPDE, then the resulting MA can be regarded as a mild solution to the SPDE. This suggests that one can generate a wide class of VMMA by choosing various Green's functions as a weight function. We demonstrate this approach in the following when $d=2$, i.e.~when we consider random fields on the plane.
\subsubsection{Examples: Modelling the weight  and  correlation functions in the plane}\label{ginplane}
Let us study some examples of possible weight and correlation functions, where we focus on the case when $d=2$, i.e.~we consider random fields in the plane. 

While mixed-moving average fields aim to model a tempo-spatial objective directly, the choice of the kernel function can sometimes be motivated from certain types of stochastic (partial) differential equations. 
In this context, we revisit the work by \cite{Whittle1954} and  by \cite{Heine1955} in particular.

In the planar case, there are three types of second-order stochastic partial differential equations (SPDEs) which are relevant for model building, depending whether both axes have space-like or time-like features or whether one axes has time-like and one space-like behaviour. In the context of SPDEs the so-called Green's function plays a key role. As discussed in 
\cite{BNBV2011}, under appropriate regularity conditions it is possible to link ambit fields, which are special cases of moving average fields, where the weight function is given by a Green's function, to mild solutions of SPDEs.  Following these findings we might want to choose the weight function in the moving average field as 
\begin{align*}
f(z_1, z_2) = G(z_1, z_2),
\end{align*}
where $G(z_1, z_2)$ is a Green's function.
Also, let us fix the notation we use in the context of SPDEs. We consider SPDEs of the type
\begin{align*}
L\left(\frac{\partial}{\partial z_1}, \frac{\partial}{\partial z_2}\right) Z(z_1, z_2) = \epsilon(z_1, z_2),
\end{align*}
where $Z$ denotes a two-parameter stochastic process and $\epsilon$ denotes a stochastic noise term; both $Z$ and $\epsilon$ are assumed to have zero mean. 
Under suitable regularity conditions, one can then express the solution formally in terms of a Green's function, i.e.
\begin{align*}
Z(t_1, t_2) = \int_{\R}\int_{\R} G(t_1-s_1, t_2-s_2) \epsilon(s_1, s_2) ds_1 ds_2, 
\end{align*}
where the Green's function satisfies the following equation
\begin{align*}
L\left(\frac{\partial}{\partial z_1}, \frac{\partial}{\partial z_2}\right)G(z_1, z_2) = \delta(z_1) \delta(z_2),
\end{align*}
where $\delta$ denotes the Dirac delta function.

We start off with an example which is motivated by having one time-like and one space-like axis. 
\begin{example}[Parabolic case] Consider the parabolic  SPDE of the type 
\begin{align*}
L\left(\frac{\partial}{\partial z_1}, \frac{\partial}{\partial z_2}\right)=\left(\frac{\partial }{\partial z_1} + \alpha \right)^2-\gamma^2 \left(\frac{\partial }{\partial z_2} + \beta \right) \quad \text{for } 0\leq \alpha^2 < \beta \gamma^2,
\end{align*}
 which corresponds to a 
  space-like $z_1$-axis and a time-like $z_2$-axis.
According to \citet[formula (5.3)]{Heine1955} the corresponding Green's function is given by
\begin{align*}
G(z_1,z_2) = \frac{-1}{2\gamma \sqrt{\pi z_2}}\exp\left(-\alpha z_1 -\beta z_2 - \frac{z_1^2 \gamma^2}{4z_2} \right)U(z_2),
\end{align*}
where
$U(z_2) = \mathbb{I}_{\{z_2\geq 0 \}}$.
The correlation function associated with such a Green's function is  according to \citet[formula (5.10)]{Heine1955} 
given by \begin{align*}
\rho(z_1, z_2) &= \rho(-z_1, -z_2), \text{where } z_2 > 0\\
&= \frac{e^{-2AB}}{\sqrt{\pi}}\int_{-\infty}^{A-B}e^{-t^2}dt +
\frac{e^{2AB}}{\sqrt{\pi}}\int^{\infty}_{A+B}e^{-t^2}dt, 
\end{align*}
where $A= \frac{\gamma}{2\sqrt{z_2}}(z_1 + \frac{2\alpha z_2}{\gamma^2})$ and $B= \sqrt{z_2 (\beta - \frac{\alpha^2 z_2}{\gamma^2})}$.
\end{example}

The above examples suggests that one could  for instance study a volatility modulated MA of the form
\begin{multline*}
X(t_1, t_2)=\int_{\times \R}\int_{-\infty}^{t_2} \frac{-1}{2\gamma \sqrt{\pi (t_2-s_2)}}\exp\left(-\alpha (t_1-s_1) -\beta (t_2-s_2) - \frac{(t_1-s_1)^2 \gamma^2}{4(t_2-s_2)} \right)\\
\cdot\sigma(s_1, s_2) W(ds_1, ds_2),
\end{multline*}
where we use exactly the same notation as in the Example above. 
Here our weight function is parameterised using three parameters $\alpha, \beta, \gamma$. Now we could generalise our model by extending the MA field to a MMA field by randomising one (or more) of the parameters. For instance, if we randomise the parameter $\gamma$ we change the behaviour of the process in the time dimension.
It should be noted that while \cite{BNBV2011} discussed which types of ambit fields can be considered as mild solutions to SPDEs, we only use the SPDE as a starting point for generating interesting weight functions. In a next step, we might want to generalise these functions, by randomising some of the corresponding parameters.  

Next, let us study  an example with two space-like axes. 
\begin{example}[Elliptic case]
Consider an elliptic SPDE given by 
\begin{align*}
L\left(\frac{\partial}{\partial z_1}, \frac{\partial}{\partial z_2}\right)=\left(\frac{\partial}{\partial z_1} - \alpha \right)^2 + \frac{\partial^2}{\partial z_2^2} - \gamma^2, \quad \text{ for }0\leq \alpha < \gamma,
\end{align*}
which corresponds to two space-like axes.
According to \citet[formulae (6.5) and (6.6)]{Heine1955}
the corresponding Green's function is given by 
\begin{align*}
G(z_1,z_2)= \frac{e^{\alpha z_1}}{2\pi}K_0(\gamma r),
\end{align*}
where $r = \sqrt{z_1^2+z_2^2}$ and $K$ is the modified Bessel function of the second kind.
In the case $\alpha = 0$, we have 
\begin{align*}
\rho(z_1,z_2) = \gamma r K_1(\gamma r).
\end{align*}
More generally, 
\begin{align*}
\rho(z_1,z_2) = \frac{\sqrt{\gamma^2-\alpha^2}}{\sin^{-1}(\alpha/\gamma)}\int_{z_1}^{\infty} \sinh(\alpha \tau)K_0(\gamma\sqrt{\tau^2+z_2^2}) d\tau.
\end{align*}
\end{example}

Finally we focus on an example with two time-like axes.
\begin{example}[Hyperbolic case]
Consider a hyperbolic SPDE
\begin{align*}
L\left(\frac{\partial}{\partial z_1}, \frac{\partial}{\partial z_2}\right)=\left( \frac{\partial }{\partial z_1 } +\alpha\right) \left( \frac{\partial }{\partial z_2 } +\beta\right) + \gamma^2, \quad \text{ for } \alpha > 0, \beta > 0, \gamma^2\geq 0,
\end{align*}
which corresponds to two time-like axes with $z_1, z_2\geq 0$. Due to \cite[formulae (7.2) and (7.7)]{Heine1955}, the corresponding Green's function is given by 
\begin{align*}
G(z_1,z_2) = e^{-\alpha z_1-\beta z_2}J_0(2\gamma\sqrt{z_1z_2})U(z_1)U(z_2),
\end{align*}
where $J_0$ is the zero order regular Bessel function of the first kind,
which implies the following correlation function in the first quadrant:
\begin{align}\label{rhoZ}
\rho(Z_1,Z_2) = \frac{2\alpha \beta}{\sqrt{\alpha^2+\beta^2}}\sqrt{1+\frac{\gamma^2}{\alpha \beta}} \int_{P(Z_1,Z_2)}^{\infty}e^{-\alpha z_1-\beta z_2}J_0(2\gamma\sqrt{z_1z_2})d\zeta,
\end{align}
where the precise definition of the bound $P(Z_1,Z_2)$ is given in \cite{Heine1955}. Note that the integration in \eqref{rhoZ} is done in the $\zeta$-direction, which has  an angle of  $\theta=\tan^{-1}(\alpha/\beta)$ with the $z_1$-axis.
In the special case when  $\gamma = 0$, we have
\begin{align*}
G(z_1,z_2)&= e^{-\alpha z_1 - \beta z_2}U(z_1)U(z_2),\\
\rho(z_1,z_2) &= \exp(-\alpha |z_1|- \beta |z_2|).
\end{align*}
\end{example}

Overall, we conclude this section by noting that an SPDE-based approach where one uses certain types of Green's functions can be  useful in order to find relevant candidates  of weight functions for VMMA fields. In order to extend this framework and to construct suitable weight functions for the mixed, i.e.~the VMMMA, case, 
one could then consider randomising some or all of the parameters appearing in the weight functions of an VMMA field through a \Levy basis. This approach will give rise to a broad class of weight functions which can be used in the context of the more general VMMMA fields. In particular, we are not restricted to studying classes of random fields which appear as (mild) solutions to particular SPDEs, but can go beyond such classes if this becomes necessary in a particular application.

\subsection{Remarks}
Specifying the weight function in the (M)MA representation is clearly not enough for characterising the entire  VM(M)MA model. In fact, the weight function only determines the  second order properties of the model. As soon as stochastic volatility is present, one needs to specify a suitable weight function in the volatility process as well. In order to identify the functional form of the weight function in the latent stochastic volatility field, one needs to consider higher moments as well as we have seen in \eqref{CovX2}.
Alternatively, one might want to find a good proxy, see e.g.~\cite{Reveillac2009} and \cite{Pakkanen2013} for some work along those lines,  to estimate stochastic volatility and  to infer the corresponding properties of the volatility field from such a proxy. With a good volatility estimator at hand, one could essentially repeat the procedure above, but now applied to the estimated volatility field, and  specify the weight function in the volatility process. Constructing such a volatility proxy in a general tempo-spatial setting  will be an interesting area for future research.

\section{Multi-self-similar random fields with stochastic volatility}\label{SectMSS}
Let us now turn to constructing (multi-) self-similar random fields which exhibit stochastic volatility. 
Self-similarity is an important concept in probability: If a stochastic process is self-similar,  it is similar to a part of itself, meaning that it is invariant under scaling in time and space. \cite{Lamperti1962} showed that self-similarity is connected to limit theorems and hence this concept has attracted a lot of interest in  stochastic modelling.  For instance, \cite{Taqqu1986} and \cite{STaqqu1994} mention applications in communications, economics, geophysics, hydrology and turbulence
and 
\cite{GPT2007} give a recent account on the importance of self-similarity in applications in climatological and environmental sciences. In the following, we will use a generalised Lamperti transform to construct (multi-) self-similar random fields  which account for stochastic volatility. Moreover, we discuss the important subclass of  multi-self-similar random fields  with second-order stationary increments.

\subsection{Definitions and generalised Lamperti transform}
Self-similarity for random fields is typically defined as follows, cf.~\cite[p.~392]{STaqqu1994}.
\begin{definition}
A random field $(X(t))_{t\in \mathbb{R}^d}$ is \emph{self-similar with index} $H> 0$ if $(X(at))_{t\in \mathbb{R}^d}$ and $(a^H X(t))_{t\in \mathbb{R}^d}$ have the same finite dimensional distributions for all $a > 0$.
\end{definition}
The above definition is a direct generalisation from the one-parameter case (when $d=1$). However,  in the context of random fields a more refined definition of self-similarity is needed which allows for component-wise self-similarity and hence \cite{GPT2007} introduced the concept of multi-self-similarity. More precisely, we have the following definition, cf.~\citet[p.~401]{GPT2007}. Let $\mathbb{R}^d_+$ denote the $d$-fold Cartesian product $\mathbb{R}_+\times \cdots \times \mathbb{R}_+$.
\begin{definition}
A random field $(X(t))_{t\in \mathbb{R}^d}$ is \emph{multi-self-similar with index} $H=(H_1,\dots, H_d)^{\top}\in \mathbb{R}^d_+$ ($H$-mss) if $(X(a_1t_1,\dots, a_d t_d))_{t\in \mathbb{R}^d}$ and $(a_1^{H_1}\cdots a_d^{H_d} X(t))_{t\in \mathbb{R}^d}$ have the same finite dimensional distributions for all $a_1, \dots, a_d > 0$.
\end{definition}
Note that in the case when $a_1=\cdots=a_d = a > 0$ and $H_1+\cdots +H_d = H > 0$, multi-self-similarity reduces to the classical self-similarity. Also, \cite{GPT2007} point out that the definition of multi-self-similarity depends on the coordinate system used to parametrise the random field.

It is well-known that in the one-parameter case, the Lamperti transform, cf.~\cite{Lamperti1962}, can be used to construct a self-similar process from a strictly stationary process. In the following, we will use the generalised Lamperti transform as derived in \citet[p.~401]{GPT2007}.
Let us define $Y=(Y(t))_{t\in \mathbb{R}^d_+}$ with
\begin{align}\label{ssY}
Y(t) = \left(\prod_{j=1}^dt_j^{H_j}\right) X(\log(t)), \quad \text{ with } \log(t) = (\log(t_1), \dots, \log(t_n))^{\top} \text{ for } t\in \mathbb{R}^d_+.
\end{align}
Then $Y$ is $H$-mss. Conversely, 
for an $H$-mss process $Y$, $X=(X(t)_{t\in \mathbb{R}^d}$ given by
\begin{align*}
X(t) = e^{-t^{\top}H}Y(e^t), \quad \text{ with }e^t= (e^{t_1}, \dots, e^{t_d})^{\top}  \text{ for } t\in \mathbb{R}^d
\end{align*}  
is (strictly) stationary, cf.~\citet[Proposition 2.1.1]{GPT2007}.
We now obtain the following  result. 
\begin{proposition}\label{SSProp}
Let $X=(X(t))_{t\in \mathbb{R}^d}$ as defined  \eqref{VMMMA}, where the volatility field is assumed to satisfy Condition \ref{V2}, and let  $Y=(Y(t))_{t \in \mathbb{R}^d_+}$ be as defined in \eqref{ssY}. Then
we have the following results.
\begin{enumerate}
\item $Y$ is 
an $H$-mss random field whose distribution is characterised by
\begin{align}\label{Hmsschar}
\mathbb{E}(i \theta Y(t)) = \exp\left(\Lambda_V\left(\frac{\theta^2}{2}\prod_{j=1}^dt_j^{2 H_j} \right) \right).
\end{align}
\item  Under Condition \ref{I}, $Y$ has  finite second moment and we have  for $t, t^* \in \mathbb{R}^d_+$:
\begin{align*}
\Cov(Y(t), Y(t^*)) = \exp\left(2H^{\top}\left(\frac{\log(t)+\log(t^*)}{2}\right) \right) R_X(\log(t)-\log(t^*)).
\end{align*}
\end{enumerate}
\end{proposition}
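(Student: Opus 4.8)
The plan is to treat $Y(t)$, for each fixed $t\in\mathbb{R}^d_+$, as a purely \emph{deterministic} rescaling of the stationary field $X$ evaluated at $\log(t)$, and thereby reduce both assertions to the marginal law and the second-order structure of $X$ already obtained above. Writing $c(t)=\prod_{j=1}^d t_j^{H_j}$, a strictly positive non-random factor, we have $Y(t)=c(t)\,X(\log(t))$ directly from \eqref{ssY}.

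For part (1), the $H$-mss property of $Y$ is exactly the content of the generalised Lamperti transform, \citet[Proposition 2.1.1]{GPT2007}, applied to the strictly stationary field $X$ (strict stationarity of $X$ under Condition \ref{V2} having been established in the preceding propositions). So the only computation needed is the characteristic function. Since $X$ is stationary, its one-dimensional characteristic function is given by \eqref{CharFct} at every spatial location, in particular at $\log(t)$; substituting $\theta c(t)$ for $\theta$ there gives
\begin{align*}
\mathbb{E}\left(\exp(i\theta Y(t))\right)=\mathbb{E}\left(\exp\bigl(i\,(\theta c(t))\,X(\log(t))\bigr)\right)=\exp\left(\Lambda_V\left(\tfrac{1}{2}\theta^2 c(t)^2\right)\right),
\end{align*}
and since $c(t)^2=\prod_{j=1}^d t_j^{2H_j}$ this is precisely \eqref{Hmsschar}. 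The only point to verify is that the deterministic factor $c(t)$ may be pulled inside the expectation as a rescaling of $\theta$, which is immediate as $c(t)$ is non-random.

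For part (2), I would first record that under Condition \ref{I} the field $X$ is second-order stationary (Existence proposition), so $X(\log(t))\in L^2$; multiplying by the finite constant $c(t)$ keeps $Y(t)\in L^2$. Bilinearity of the covariance together with the determinism of the scalings then gives
\begin{align*}
\Cov(Y(t),Y(t^*))=c(t)\,c(t^*)\,\Cov\bigl(X(\log(t)),X(\log(t^*))\bigr).
\end{align*}
Stationarity of $X$ turns the right-hand covariance into $R_X(\log(t)-\log(t^*))$, while
\begin{align*}
c(t)\,c(t^*)=\exp\bigl(H^{\top}(\log(t)+\log(t^*))\bigr)=\exp\left(2H^{\top}\,\tfrac{\log(t)+\log(t^*)}{2}\right),
\end{align*}
and combining the two factors produces the stated formula.

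I do not expect a genuine obstacle here: both parts are short corollaries of results already in hand, the heavy lifting (infinite divisibility and the type G representation \eqref{CharFct}, and the Lamperti transform itself) having been carried out earlier. If anything needs care it is purely bookkeeping --- applying the Lamperti result to the correct strictly stationary $X$, and tracking the deterministic factor $c(t)$ consistently through both the characteristic function and the covariance --- and no new integrability estimate beyond those supplied by Conditions \ref{I} and \ref{V2} is required.
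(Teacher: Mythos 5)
Your proposal is correct and follows essentially the same route as the paper: the $H$-mss property is delegated to \citet[Proposition 2.1.1]{GPT2007}, and the characteristic function and covariance formulae are obtained by the direct calculation you spell out, using the stationarity of $X$ together with \eqref{CharFct} and the deterministic scaling factor $\prod_{j=1}^d t_j^{H_j}$. The paper merely states that these follow ``by direct calculation''; your write-up supplies exactly those calculations.
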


\begin{proof}
 $Y$ is an $H$-mss random field according to  \citet[Proposition 2.1.1]{GPT2007}. The results for the characteristic function and the covariance function follows by direct calculation using  the stationarity of $X$.
\end{proof}

From the structure of the covariance function and recalling that $X$ is strictly stationary, we immediately get that the process $Y$
 belongs to the class of so-called \emph{locally stationary reducible} random fields, cf.~\citet[p.~403]{GPT2007} and also \cite{GentonPerrin2004}.

Similar to the findings in \cite{BNPerezAbreu1999} in the case when $d=1$ we deduce from \eqref{Hmsschar} the following result for general $d \in \N$.
\begin{proposition}
Under the conditions of Proposition \ref{SSProp}, for every $t\in \mathbb{R}^d_+$ the law of $Y(t)$ is of type G.
\end{proposition}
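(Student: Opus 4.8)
The plan is to show that for fixed $t \in \mathbb{R}^d_+$, the law of $Y(t)$ coincides with that of a variance mixture of a centred Gaussian, which is precisely the definition of type G. The essential observation is that equation \eqref{Hmsschar} exhibits the characteristic function of $Y(t)$ in exactly the same functional form as the characteristic function of $X(t)$ in \eqref{CharFct}, only with the argument rescaled by the deterministic constant $\prod_{j=1}^d t_j^{2H_j}$. Since the preceding proposition already established that $X(t)$ is of type G and has the representation $\mathbb{E}(\exp(i\theta X(t))) = \exp(\Lambda_V(\theta^2/2))$, the task reduces to recognising that a deterministic positive rescaling of the variance parameter preserves membership in the type G class.

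First I would fix $t \in \mathbb{R}^d_+$ and set the constant $\lambda := \prod_{j=1}^d t_j^{2H_j} > 0$, so that \eqref{Hmsschar} reads
\begin{align*}
\mathbb{E}(\exp(i\theta Y(t))) = \exp\left(\Lambda_V\left(\frac{\theta^2}{2}\lambda\right)\right).
\end{align*}
Next I would recall from \eqref{LaplaceV} that $\Lambda_V(\zeta) = \int_0^\infty (e^{-\zeta x} - 1)\, U(dx)$ with $U$ a Lévy measure on $[0,\infty)$. Substituting $\zeta = \tfrac{1}{2}\theta^2 \lambda$ and performing the change of variable $x \mapsto \lambda x$ inside the integral, the characteristic function of $Y(t)$ takes the form $\exp\bigl(\int_0^\infty (e^{-\theta^2 y/2} - 1)\, \widetilde U(dy)\bigr)$, where $\widetilde U$ is the pushforward of $U$ under scaling by $\lambda$; this $\widetilde U$ remains a Lévy measure on $[0,\infty)$ since scaling by a positive constant preserves both $\sigma$-finiteness and the integrability of $\min(1,x)$. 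By \citet[Proposition 3]{Rosinski1991} — the same citation used in the proposition on the marginal distribution of $X$ — a random variable with characteristic function of this form is a variance mixture of a normal with an infinitely divisible nonnegative mixing variable, hence of type G.

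The main point requiring care, rather than a genuine obstacle, is verifying that the rescaled measure $\widetilde U$ is again a bona fide Lévy measure, so that the Rosiński representation genuinely applies and the conclusion of type G is legitimate; this is the step where one invokes $\lambda > 0$, which holds precisely because $t \in \mathbb{R}^d_+$ and each $H_j > 0$. Alternatively, and perhaps more transparently, I would argue directly at the level of the mixing variable: since $Y(t) = \bigl(\prod_j t_j^{H_j}\bigr) X(\log(t))$ by definition \eqref{ssY} and $X(\log(t))$ is of type G, multiplying a type G variable by the deterministic positive scalar $\prod_j t_j^{H_j}$ simply rescales the Gaussian component, and the product of a Gaussian with a scalar is again Gaussian with a correspondingly scaled mixing variance. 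Either route closes the argument, and I would favour the second as it avoids the change-of-variable bookkeeping entirely.
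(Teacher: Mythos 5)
Your proposal is correct, and your preferred second route is genuinely different from (and more elementary than) the paper's argument. The paper works entirely at the level of the characteristic exponent: it sets $\Phi(\zeta):=-\Lambda_V\bigl(\zeta\prod_{j}t_j^{2H_j}\bigr)$, checks $\Phi(0)=0$ and that $\Phi'$ is completely monotone on $(0,\infty)$, and invokes \citet[Proposition 3]{Rosinski1991} --- your first route (rescaling the L\'evy measure $U$ and reapplying the same Rosi\'nski criterion) is essentially this same argument with the verification phrased via the L\'evy--Khintchine form rather than via complete monotonicity, the two being equivalent by Bernstein's theorem. Your second route bypasses \eqref{Hmsschar} altogether: $Y(t)=\bigl(\prod_j t_j^{H_j}\bigr)X(\log(t))$ by \eqref{ssY}, the earlier proposition gives that $X(\log(t))$ is of type G, i.e.\ $X(\log(t))\overset{d}{=}\sqrt{V}Z$ with $V$ nonnegative ID independent of $Z\sim N(0,1)$, and multiplying by the deterministic constant $c=\prod_j t_j^{H_j}>0$ yields $\sqrt{c^2V}\,Z$ with $c^2V$ again nonnegative ID --- so type G is preserved. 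What the paper's route buys is the explicit form of the exponent $\Phi$, which documents how the law of $Y(t)$ depends on $t$ and on $\Lambda_V$; what your scaling argument buys is brevity and transparency, needing only the definition of type G, closure of the ID property under positive scaling, and the previously established result for $X$. Both are complete proofs.
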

\begin{proof}
Let $t\in \mathbb{R}^d_+$. 
From \eqref{Hmsschar} we have
$\mathbb{E}(i \theta Y(t)) = \exp(\Lambda_V(\frac{\theta^2}{2}\prod_{j=1}^dt_j^{2 H_j} ) ) =  \exp(- \Phi(\frac{\theta^2}{2}))$,
where
$\Phi(\zeta) :=-\Lambda_V(\zeta \prod_{j=1}^dt_j^{2 H_j})$.
Since $\prod_{j=1}^dt_j^{2 H_j}\geq 0$, one easily finds that $\Phi(0)=0$ and $\Phi'$ is  completely monotone  on $(0,\infty)$. This implies that the law of $Y(t)$ is of type G, cf.~\citet[Proposition 3]{Rosinski1991}.
\end{proof} 

Summing up we have found  a method for  constructing multi-self-similar random fields which have type G distribution and allow for stochastic volatility.

\subsection{Self-similar processes with translation invariant increments}
In applications  one is sometimes  interested in (multi)-self-similar processes with stationary increments. In the following we will show how such processes can be constructed. 

Note that we use the term \emph{stationary} increments in the context of random fields interchangeably with saying  that a  random field is invariant under translation.
\begin{definition}
The random field $(Y(t))_{t \in \mathbb{R}_+^d}$ has \emph{second-order translation-invariant increments}  if for all $t, h \in \mathbb{R}_+^d$, we have that  $\mathbb{E}(Y(t+h)-Y(t))^2=\mathbb{E}(Y(h))^2$.
\end{definition}
 The following results are related to the findings in  \citet[Section 4.1]{BNPerezAbreu1999}. However, the difference in our set-up is that we work with volatility modulated Gaussian fields rather than with (one-parameter) L\'{e}vy processes as the driving process of the MMA fields, respectively.
\begin{proposition}\label{CorTransInvIncr}
Let $Y=(Y(t))_{t\in \mathbb{R}^d}$ be as defined in \eqref{ssY} and suppose that $Y$ has second-order translation-invariant increments. Then 
\begin{align*}
\Cov(Y(t),Y(s))&= \frac{1}{2} \left[\prod_{j=1}^dt_j^{2 H_j}+\prod_{j=1}^ds_j^{2 H_j}-\prod_{j=1}^d(t_j-s_j)^{2 H_j} \right] \Var(X(0)), \quad s, t \in \mathbb{R}_+^d.
\end{align*}
Moreover, the corresponding  correlation function of $X$ is  given by
\begin{align}\label{rhoX}
\rho_X(h)
&= \cosh(h^{\top}H)-2^{2\sum_{j=1}^dH_j-1}\prod_{k=1}^d\sinh^{2H_k}\left(\frac{1}{2}h_k \right), \quad h \in \mathbb{R}^d.
\end{align}
\end{proposition}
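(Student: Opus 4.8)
The plan is to feed the translation-invariance hypothesis into the covariance identity for $Y$ already obtained in Proposition \ref{SSProp}. First I would note that $Y$ is centred: since $\mathbb{E}(X(\cdot))=0$ and $Y(t)=\bigl(\prod_{j=1}^d t_j^{H_j}\bigr)X(\log(t))$, we have $\mathbb{E}(Y(t))=0$ for all $t\in\mathbb{R}^d_+$, and $Y(0)=0$ because each $H_j>0$. Hence covariances coincide with product moments and the standard polarisation identity gives, for $t-s\in\mathbb{R}^d_+$,
\begin{align*}
\Cov(Y(t),Y(s))=\tfrac12\bigl[\mathbb{E}(Y(t)^2)+\mathbb{E}(Y(s)^2)-\mathbb{E}((Y(t)-Y(s))^2)\bigr],
\end{align*}
and second-order translation invariance replaces the last term by $\mathbb{E}(Y(t-s)^2)$. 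The diagonal moments are read off from Proposition \ref{SSProp} by taking the two arguments equal: $\mathbb{E}(Y(t)^2)=\bigl(\prod_{j=1}^d t_j^{2H_j}\bigr)R_X(0)$ with $R_X(0)=\Var(X(0))$. Substituting the three diagonal terms then yields the first display of the proposition at once.

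For the correlation function I would equate this covariance with the off-diagonal expression of Proposition \ref{SSProp} (that formula with $t^*$ renamed to $s$), namely $\Cov(Y(t),Y(s))=\bigl(\prod_{j=1}^d (t_js_j)^{H_j}\bigr)R_X(\log(t)-\log(s))$, and solve for $R_X$. Writing $h_j=\log t_j-\log s_j$, so that $t_j/s_j=e^{h_j}$, the $(t_j,s_j)$ dependence collapses to dependence on $h$ alone: the two power terms give $e^{h^\top H}+e^{-h^\top H}=2\cosh(h^\top H)$, while each factor of the increment term becomes $(t_j-s_j)^{2H_j}/(t_js_j)^{H_j}=(e^{h_j}-2+e^{-h_j})^{H_j}=(4\sinh^2(h_j/2))^{H_j}$ after the identity $\cosh(x)-1=2\sinh^2(x/2)$. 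Dividing by $R_X(0)=\Var(X(0))$ then delivers the stated correlation formula \eqref{rhoX}.

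The work is bookkeeping rather than conceptual, so the only real care is needed at the boundary of the construction. The increment relation applies directly only when $t-s\in\mathbb{R}^d_+$, so I would first establish both displays on the orthant $\{t_j\geq s_j\ \forall j\}$ and then extend to all $h\in\mathbb{R}^d$ using that $R_X$ is even (a consequence of the stationarity of $X$) together with the reading $(\cdot)^{2H_j}=((\cdot)^2)^{H_j}$, under which $\prod_{k=1}^d\sinh^{2H_k}(h_k/2)$ is well defined and non-negative even for non-integer $H_k$. As a built-in consistency check, note that the left-hand side of the equation solved for $R_X$ ends up depending on $(t,s)$ only through $h$; the stationarity needed for $R_X(\log t-\log s)$ to be well posed is therefore not an extra hypothesis but a by-product of the calculation.
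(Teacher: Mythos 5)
Your proof is correct and follows essentially the same route as the paper: polarisation plus translation invariance for the covariance of $Y$, then the Lamperti relation to recover $\rho_X$. The only difference is cosmetic --- the paper specialises to $s=e^{0}=(1,\dots,1)$ before simplifying, whereas you keep general $(t,s)$ and observe the dependence collapses to $h=\log t-\log s$; your explicit handling of the orthant restriction and of the reading $(\cdot)^{2H_j}=((\cdot)^2)^{H_j}$ is in fact slightly more careful than the paper's.
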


\begin{proof}
 For all $t \in \mathbb{R}^d_+$ we have 
$\mathbb{E}(Y(t))= 0$ and  $\Var(Y(t))= \mathbb{E}(Y^2(t))= \prod_{j=1}^dt_j^{2 H_j} \Var(X(0))$.
Under the assumption that  $Y$ has second-order translation invariant increments, we get for $s, t \in \mathbb{R}^d_+$ that 
$\mathbb{E}(Y(t)-Y(s))^2 =\mathbb{E}(Y^2(t-s))= \prod_{j=1}^d(t_j-s_j)^{2 H_j} \Var(X(0))$. Moreover
\begin{align*}
\Cov(Y(t),Y(s))&= \mathbb{E}(Y(t)Y(s))=\frac{1}{2}\left[\mathbb{E}(Y^2(t))+\mathbb{E}(Y^2(s))-\mathbb{E}(Y(t)-Y(s))^2 \right]\\
&= \frac{1}{2} \left[\prod_{j=1}^dt_j^{2 H_j}+\prod_{j=1}^ds_j^{2 H_j}-\prod_{j=1}^d(t_j-s_j)^{2 H_j} \right] \Var(X(0)).
\end{align*}
For the corresponding process $X$, we get using the inverse Lamperti transform for $h,0 \in \mathbb{R}^d$:
\begin{align*}
\Cov(X(h),X(0))&=\mathbb{E}(X(h)X(0)) = e^{-h^{\top}H} e^{-0^{\top}H} \Cov(Y(e^h), Y(e^0))\\
&= e^{-h^{\top}H}   \frac{1}{2} \left[\prod_{j=1}^de^{2 h_j  H_j}+1-\prod_{j=1}^d(e^{h_j}-1)^{2 H_j} \right] \Var(X(0)).
\end{align*}
Hence
\begin{align*}
\Cor(X(h),X(0))&= \frac{1}{2}\left[ e^{h^{\top}H}+e^{-h^{\top}H} -\prod_{j=1}^de^{-h_jH_j}\left(e^{h_j}-1\right)^{2 H_j}\right]
\\
&= \frac{1}{2}\left[ e^{h^{\top}H}+e^{-h^{\top}H} -\prod_{j=1}^d\left(e^{h_j/2}-e^{-h_j/2}\right)^{2 H_j}\right]
\\
&= \frac{1}{2}\left[ e^{h^{\top}H}+e^{-h^{\top}H} -\prod_{j=1}^d\left(e^{h_j/2}-e^{-h_j/2}\right)^{2 H_j}\right]\\
&=\left[\cosh(h^{\top}H)-2^{2\sum_{j=1}^dH_j-1}\prod_{k=1}^d\sinh^{2H_k}\left(\frac{h_k}{2} \right)\right].
\end{align*}
\end{proof}

\subsubsection{Spectral density and weight function in case $d=1$}
In the following, we concentrate on the case when $d=1$. In this case, \emph{translation-invariant} and \emph{stationary} increments are equivalent expressions.  Then the results from Proposition \ref{CorTransInvIncr} simplify as follows.

\begin{corollary}
Let $Y=(Y(t))_{t\in \mathbb{R}}$ be as defined in \eqref{ssY} (for $d=1$), i.e.~$Y(t)=t^HX(\log(t))$ for $H, t > 0$ and suppose that $Y$ has second-order stationary increments. Then 
\begin{align*}
\Cov(Y(t),Y(s))&= \frac{1}{2} \left[t^{2 H}+s^{2 H}-(t-s)^{2 H} \right] \Var(X(0)), \quad s, t \in \mathbb{R}_+^d.
\end{align*}
Moreover, the correlation function of $X$ is given by
\begin{align}\label{rhoXd1}
\rho_X(h)
&= \cosh(h H)-2^{2 H-1}\sinh^{2H}\left(\frac{1}{2}h \right), \quad h \in \mathbb{R}.
\end{align}
\end{corollary}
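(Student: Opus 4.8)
The plan is to obtain this statement as the $d=1$ specialisation of Proposition \ref{CorTransInvIncr}, so that the only real work is (i) checking that the hypotheses coincide and (ii) collapsing the products over $j$ to single factors in the two displayed formulas. First I would confirm the premise is compatible: Proposition \ref{CorTransInvIncr} assumes $Y$ has \emph{second-order translation-invariant} increments, namely $\E(Y(t+h)-Y(t))^2 = \E(Y(h))^2$ for all $t,h\in\R_+^d$. When $d=1$, every increment of $Y$ over an interval is exactly of this translation type, so the Corollary's hypothesis of second-order stationary increments is literally the same condition. This is precisely the equivalence remarked upon just before the statement, and it is what lets me invoke Proposition \ref{CorTransInvIncr} with $d=1$.

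Second, I would read off the covariance formula by setting $d=1$ in Proposition \ref{CorTransInvIncr}. With a single index, $H=(H_1)=H$, $t=t_1$, $s=s_1$, the factor $\prod_{j=1}^d t_j^{2H_j}$ reduces to $t^{2H}$, and analogously for $s^{2H}$ and $(t-s)^{2H}$, which yields
\begin{align*}
\Cov(Y(t),Y(s)) = \frac{1}{2}\left[t^{2H}+s^{2H}-(t-s)^{2H}\right]\Var(X(0)).
\end{align*}
For the correlation function I would specialise \eqref{rhoX} in the same manner: the sum $\sum_{j=1}^d H_j$ becomes $H$, the product $\prod_{k=1}^d \sinh^{2H_k}(\tfrac12 h_k)$ becomes the single factor $\sinh^{2H}(\tfrac12 h)$, and the inner product $h^{\top}H$ becomes the scalar $hH$, giving
\begin{align*}
\rho_X(h) = \cosh(hH) - 2^{2H-1}\sinh^{2H}\left(\tfrac{1}{2}h\right).
\end{align*}

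There is no genuine obstacle in this argument: it is a pure specialisation of an already-established general-$d$ identity. The only point meriting an explicit line of justification is the identification of the two increment notions in step (i), which is immediate once one observes that in dimension one there is only the single translation direction, so \emph{translation-invariant} and \emph{stationary} increments describe the same property. Everything thereafter is a direct substitution into the formulas of Proposition \ref{CorTransInvIncr}.
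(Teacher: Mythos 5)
Your proposal is correct and matches the paper exactly: the paper gives no separate proof for this corollary, presenting it as the immediate $d=1$ specialisation of Proposition \ref{CorTransInvIncr}, with the sentence preceding the statement making precisely your point that translation-invariant and stationary increments coincide when $d=1$. The substitutions you carry out (collapsing the products, sums, and inner products to single scalar factors) are the intended and complete justification.
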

Based on the result for the correlation function of $X$ given in \eqref{rhoXd1}, we can derive an explicit formula for the spectral density associated with the correlation function. More precisely, we have the following result.

\begin{proposition}\label{PropSpectralDensity}
Suppose that $d=1$ and that $0<H<1$.
\begin{enumerate} 
\item The spectral density associated with the correlation function \eqref{rhoX}  exists and 
is for $w\in \mathbb{R}$ given by
\begin{align}\label{spectraldensity}
\gamma(w) = \frac{1}{2\pi}\sum_{k=0}^{\infty}{ 2H\choose k}(-1)^{k-1} \frac{k-H}{(k-H)^2+w^2}
\end{align}
In the special case when  $H=1/2$, we get $\gamma(w)=2\pi^{-1}(1+(2w)^2)^{-1}$.
\item Consider the moving average representation derived in  Corollary \ref{corollary1}. The corresponding weight function $f$ in that representation is proportional to the $L^2$-Fourier transform of $\sqrt{\gamma}$.
 \end{enumerate}
\end{proposition}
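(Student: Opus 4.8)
The plan is to obtain $\gamma$ directly as the Fourier transform of the correlation function \eqref{rhoXd1}, exploiting the fact that $\rho_X$, although built from the exponentially growing $\cosh$ and $\sinh^{2H}$ terms, is in fact integrable because the leading exponentials cancel. First I would restrict to $h>0$ (extending evenly afterwards, since $\rho_X$ is even) and rewrite the second term using $(2\sinh(h/2))^{2H}=e^{Hh}(1-e^{-h})^{2H}$, so that
\begin{align*}
2^{2H-1}\sinh^{2H}\left(\tfrac{h}{2}\right)=\tfrac12 e^{Hh}(1-e^{-h})^{2H}=\tfrac12\sum_{k=0}^{\infty}{2H\choose k}(-1)^k e^{(H-k)h},
\end{align*}
by the binomial series, valid for $h>0$ since $0<e^{-h}<1$. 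Subtracting this from $\cosh(Hh)=\tfrac12 e^{Hh}+\tfrac12 e^{-Hh}$, the $k=0$ term $\tfrac12 e^{Hh}$ cancels the growing part of $\cosh$, leaving
\begin{align*}
\rho_X(h)=\tfrac12 e^{-Hh}-\tfrac12\sum_{k=1}^{\infty}{2H\choose k}(-1)^k e^{-(k-H)h},\qquad h>0,
\end{align*}
a series of genuinely decaying exponentials because $k-H>0$ for every $k\ge1$ and $0<H<1$. This cancellation is the structural observation on which everything rests.

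Next I would compute the spectral density from the standard inversion $\gamma(w)=\frac{1}{2\pi}\int_{\mathbb{R}}e^{-iwh}\rho_X(h)\,dh=\frac1\pi\int_0^\infty\cos(wh)\rho_X(h)\,dh$, applying $\int_0^\infty e^{-ah}\cos(wh)\,dh=a/(a^2+w^2)$ for $a>0$ term by term. The standalone term $\tfrac12 e^{-Hh}$ produces $\frac{1}{2\pi}\frac{H}{H^2+w^2}$, which is precisely the $k=0$ summand of \eqref{spectraldensity} (there $k-H=-H$ and $(-1)^{k-1}=-1$), while each $k\ge1$ term produces $-\frac{1}{2\pi}{2H\choose k}(-1)^k\frac{k-H}{(k-H)^2+w^2}=\frac{1}{2\pi}{2H\choose k}(-1)^{k-1}\frac{k-H}{(k-H)^2+w^2}$; collecting these reproduces \eqref{spectraldensity} as a single sum from $k=0$. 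The special case $H=1/2$ is then a one-line check: ${1\choose k}$ vanishes for $k\ge2$, and the $k=0,1$ terms both equal $\frac{1}{2\pi}\frac{1/2}{1/4+w^2}$, summing to $\frac{1}{2\pi}(1/4+w^2)^{-1}=2\pi^{-1}(1+(2w)^2)^{-1}$, as claimed.

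For the second part there is essentially nothing new to do: once $\gamma$ has been identified in part one as the (up to a constant) spectral density of $X$, the claim is exactly the content of Corollary \ref{corollary1} applied to this $X$, i.e.\ $f\propto\widehat{\sqrt\gamma}$ with $\widehat{\,\cdot\,}$ the $L^2$-Fourier transform. The only point to record is that $\sqrt\gamma\in L^2(\leb)$, which follows because $\gamma\ge0$ (by Bochner, as in Proposition \ref{CorRep}) and $\int_{\mathbb{R}}\gamma(w)\,dw=\rho_X(0)=1<\infty$, so that $\gamma\in L^1(\leb)$; this is precisely the hypothesis under which Proposition \ref{Rhotof} and Corollary \ref{corollary1} operate.

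I expect the main obstacle to be the rigorous justification of the termwise integration (and of the claim $\rho_X\in L^1$, hence that the spectral density exists and is continuous). The clean way is to control the binomial coefficients: for $0<2H<2$ one has $|{2H\choose k}|\sim C\,k^{-2H-1}$ as $k\to\infty$, so $\sum_k|{2H\choose k}|<\infty$ and moreover $\sum_{k\ge1}|{2H\choose k}|\int_0^\infty e^{-(k-H)h}\,dh=\sum_{k\ge1}|{2H\choose k}|/(k-H)<\infty$. This absolute summability lets me invoke Fubini/dominated convergence to exchange sum and integral, and it also shows that the series for $\rho_X$ converges uniformly and is dominated by an integrable exponential (slowest decay rate $\min(H,1-H)>0$), giving $\rho_X\in L^1$. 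Some care is also needed at $h=0$, where the series sits at the boundary $e^{-h}=1$ of the binomial expansion; there one checks convergence using $2H>0$ and recovers $\rho_X(0)=1$, confirming consistency.
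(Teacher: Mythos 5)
Your proposal is correct and follows essentially the same route as the paper: the same binomial expansion of $2^{2H}\sinh^{2H}(h/2)=e^{Hh}(1-e^{-h})^{2H}$ with cancellation of the growing exponential against $\cosh(Hh)$, the same term-by-term Fourier inversion via the Cauchy-type integral $\int_0^\infty e^{-ah}\cos(wh)\,dh=a/(a^2+w^2)$, and the same reduction of part 2 to Corollary \ref{corollary1}. The only difference is that you spell out the absolute-summability estimate $|{2H\choose k}|\sim Ck^{-2H-1}$ justifying the interchange of sum and integral, which the paper leaves implicit (deferring to the cited reference).
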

\begin{proof}
1.~The first part of the proposition has been shown in \citet[Theorem 4]{BNPerezAbreu1999} in the context of L\'{e}vy processes. We give a short proof in the following, where we see that the key arguments do not change in our different modelling set-up. 
 In order to guarantee the existence of a spectral density, we need that $\int_{\mathbb{R}^d} |\rho_X(h)|dh < \infty$, cf.~\cite{Bochner1955}, which holds as soon as $0<H<1$. 
Recall that the spectral density function satisfies
$\rho_X(h) = \int_{\mathbb{R}}e^{ih w}f_X(w) dw$.
Hence, using Fourier inversion, we have
 $f_X(w) = (2\pi)^{-1}\int_{\mathbb{R}}e^{-ihw}\rho_X(h)dh$.
 
 First, we rewrite the correlation function and then we use the Fourier inversion for each summand. In particular, we have
 \begin{align*}
 \rho(h) &= \frac{1}{2}\left[e^{h H} + e^{-h H} -\left(e^{h/2}-e^{-h/2}\right)^{2 H}\right]
 &= \frac{1}{2}\left[e^{-h H} +\sum_{k=1}^{\infty}{ 2H\choose k}(-1)^{k-1} e^{-h(k-H)}\right].
 \end{align*}
 Since the  correlation function is symmetric, we may write $\rho_{X}(h)= \rho_X(|h|)$. 
Hence $\rho(h) = \rho_1(h)+\rho_2(h)$, where for $h\in \mathbb{R}$
\begin{align*}
\rho_1(h):= \frac{1}{2}e^{-|h| H},
&&
\rho_2(h):=\sum_{k=1}^{\infty}{ 2H\choose k}(-1)^{k-1} e^{-|h|(k-H)}.
\end{align*}
Corresponding to the two parts of the correlation function we split the spectral density in two parts and have $f_X(w) = f_1(w)+f_2(w)$ for $w \in \mathbb{R}$ , where
\begin{align*}
f_1(w)&=(2\pi)^{-1}\int_{\mathbb{R}}e^{-ihw}\frac{1}{2}e^{-|h|H}dh
=  \frac{1}{2} (2\pi)^{-1}\int_{\mathbb{R}}e^{-ihw}e^{-|h|H}dh 
=\frac{1}{2\pi} \frac{H}{H^2+w^2},
\end{align*}
since the  Fourier inversion of the characteristic function of the Cauchy density leads to the Cauchy density. The second part uses the same arguments and combining the two terms  gives the final result.\\
 2.~The proof of the second part of the proposition follows along the lines of 
 the proof of Proposition \ref{Rhotof} where we worked with Fourier transforms and inversions in $L^2$. Altogether, we get that the weight function $f$ in the VMMA representation is proportional to the $L^2$-Fourier transform of the square root of the spectral density, i.e.~$\sqrt{\gamma}$.
 \end{proof}

We conclude this Section with a characterisation result for our stochastic volatility modulated processes. 
 
\begin{corollary}
Suppose $X$ has spectral density given by \eqref{spectraldensity}. Then its correlation function  is given by 
$\rho_X(h)= \cosh(hH)-2^{2H-1}\sinh^{2H}(h/2)$,  for $h \in \mathbb{R}$.
 Moreover,  $Y=(Y(t))_{t>0}$ defined by
$Y(t)=t^HX(\log(t))$ is $H$-self-similar and  has second-order stationary increments.
\end{corollary}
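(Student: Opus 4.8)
The plan is to read this corollary as the assembly of the converse directions of Propositions \ref{PropSpectralDensity} and \ref{CorTransInvIncr}, together with the self-similarity furnished by the Lamperti transform. For the first claim, that the prescribed spectral density forces $\rho_X(h)=\cosh(hH)-2^{2H-1}\sinh^{2H}(h/2)$, I would argue by uniqueness of the Fourier-transform pair. Since $0<H<1$, the series \eqref{spectraldensity} defines a function $\gamma\in L^1(\leb)$, and the correlation function is recovered from the spectral density by $\rho_X(h)=\int_{\mathbb{R}}e^{ihw}\gamma(w)\,dw$; as the Fourier transform is a bijection on the relevant class of continuous integrable functions, $\gamma$ determines $\rho_X$ uniquely. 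Proposition \ref{PropSpectralDensity} already exhibited exactly this $\gamma$ as the spectral density attached to $\rho_X(h)=\cosh(hH)-2^{2H-1}\sinh^{2H}(h/2)$, so by uniqueness this is the correlation function of $X$.

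For the self-similarity of $Y(t)=t^HX(\log(t))$ I would simply invoke Proposition \ref{SSProp}(1) specialised to $d=1$ (so $H_1=H$), where multi-self-similarity collapses to ordinary $H$-self-similarity. This is a direct consequence of the generalised Lamperti transform applied to the strictly stationary field $X$, and requires no additional computation.

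The remaining and most substantial step is to verify that $Y$ has second-order stationary increments, which is the converse of the corollary to Proposition \ref{CorTransInvIncr}. Here I would start from the covariance formula of Proposition \ref{SSProp}(2), which for $d=1$ reads $\Cov(Y(t),Y(s))=(ts)^H R_X(\log(t/s))=(ts)^H\Var(X(0))\,\rho_X(\log(t/s))$. Writing $h=\log(t/s)$ (so $t=se^h$) and using $e^{hH}\cosh(hH)=\tfrac12(e^{2hH}+1)$ together with the identity $e^{hH}\sinh^{2H}(h/2)=2^{-2H}(e^h-1)^{2H}$, I would reduce the right-hand side to $\tfrac12\big[t^{2H}+s^{2H}-(t-s)^{2H}\big]\Var(X(0))$. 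From this fractional-Brownian-motion-type covariance one reads off $\Var(Y(t))=t^{2H}\Var(X(0))$ and $\mathbb{E}(Y(t+h)-Y(t))^2=|h|^{2H}\Var(X(0))=\mathbb{E}(Y(h))^2$, which is precisely second-order translation invariance of the increments.

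The main obstacle I anticipate is bookkeeping rather than conceptual: the algebraic reduction in the last step is the computation of Proposition \ref{CorTransInvIncr} run in reverse, so the only genuine care needed is the passage from $(e^h-1)^{2H}$ to $|t-s|^{2H}$ for both signs of $h$, ensuring the correct absolute value appears and that the increment variance is symmetric in $t$ and $s$. A secondary point deserving a word of justification is the uniqueness invoked in the first step: one should confirm that the hypotheses guaranteeing that a spectral density determines its correlation function, namely $\gamma\in L^1(\leb)$ and continuity of $\rho_X$, are satisfied, which they are for $0<H<1$.
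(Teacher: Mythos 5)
Your proposal is correct and follows exactly the route the paper intends: the paper states this corollary without proof, treating it as the converse assembly of Proposition \ref{PropSpectralDensity} (Fourier inversion of $\gamma$ recovers $\rho_X$), Proposition \ref{SSProp} (Lamperti transform gives $H$-self-similarity), and the computation of Proposition \ref{CorTransInvIncr} run in reverse, and your algebra reducing $(ts)^H\rho_X(\log(t/s))$ to the fractional-Brownian-motion covariance $\tfrac12\bigl[t^{2H}+s^{2H}-(t-s)^{2H}\bigr]\Var(X(0))$ checks out. Your flagged caveat about interpreting $\sinh^{2H}(h/2)$ via $|\sinh(h/2)|^{2H}$ for negative $h$ is a genuine (if minor) imprecision inherited from the paper itself, and handling it as you propose is the right fix.
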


Interestingly, the above result  is exactly the same as obtained in  
 \citet[Theorem 4]{BNPerezAbreu1999} in the context of \levy-driven stationary and self-similar processes.

\section{Conclusion}\label{SectCon}
This paper has focused on mixed moving average fields which allow for stochastic volatility modulation.  
We have studied the probabilistic properties of such processes in detail and have in particular focused on volatility modulated mixed moving average  fields whose marginal distribution is of type G. Such processes are relevant in a wide range of applications and constitute an important extension of  mixed moving average fields which are constructed based on type G \Levy bases and do not account for stochastic volatility. 

Moreover, we have introduced two methods which can be used for finding suitable weight functions in the moving average representation. One is based on the idea of starting from a suitable integrable covariance function and modelling the weight function as the $L^2$-Fourier transform of a root of the corresponding spectral density. The other approach uses Green's functions from SPDEs as input.
 
Another  contribution of this paper is that it provides a tractable method for constructing multi-self-similar random fields which allow for stochastic volatility and have type G distribution.
 
Finally, in the one-parameter case when $d=1$, we have constructed a class of stochastic processes which has the aforementioned properties of self-similarity, stochastic volatility, a type G distribution and, at the same time, has stationary increments - a property often required in empirical work.

\section*{Acknowledgement} 
A.~E.~D.~Veraart acknowledges financial support by 
a Marie Curie
FP7 Career Integration Grant within the 7th European Union Framework Programme.

\bibliographystyle{agsm}
\bibliography{AlmutBib}

\begin{appendix}

\end{appendix}
\end{document}